\renewcommand\section{\@startsection{section}{1}{0mm}{-1.5\baselineskip}{\baselineskip}{\normalsize\bfseries\sffamily}}
\renewcommand\subsection{\@startsection{subsection}{1}{0mm}{-\baselineskip}{\baselineskip}{\normalsize\bfseries\sffamily}}
\def\@fnsymbol#1{\ensuremath{\ifcase#1\or *\or **\or \dagger\or \ddagger\or
   \mathsection\or \mathparagraph\or \|\or \dagger\dagger
   \or \ddagger\ddagger \else\@ctrerr\fi}}
\newlength{\preskip}
\newlength{\postskip}
\newtheoremstyle{theorem}{\preskip}{\postskip}{\itshape}{}{\bfseries}{}
{.5em}{\textbf{\thmname{#1}\thmnumber{ #2} (\thmnote{ #3})}}
\newtheoremstyle{definition}{\preskip}{\postskip}{\normalfont}{0pt}{\bfseries}{}{.5em}{}
\newtheoremstyle{remark}{\preskip}{\postskip}{\normalfont}{0pt}{\bfseries}{}{.5em}{}
\theoremstyle{theorem} \newtheorem{thm}{Theorem}[section]
\theoremstyle{theorem} \newtheorem{lem}[thm]{Lemma}
\theoremstyle{theorem} \newtheorem{prop}[thm]{Proposition}
\theoremstyle{theorem} \newtheorem{kor}[thm]{Corollary}
\theoremstyle{definition} 
\theoremstyle{remark} 
\theoremstyle{remark} 
\theoremstyle{definition} 
\theoremstyle{definition} \newtheorem*{ack}{Acknowledgements}
\theoremstyle{remark} 
\theoremstyle{remark} 
\theoremstyle{definition}  \newtheorem{bsp}[thm]{Example}
\theoremstyle{definition}  
\DeclareMathOperator \re {Re}
\DeclareMathOperator \Exp {Exp}
\newcommand{\I}{\mathds{1}}
\newcommand\fa{\qquad \text{for all \ }}
\newcommand{\cadlag}{c\`adl\`ag }
\newcommand\mc[1] {\mathcal{#1}}
\newcommand\mbb[1] {\mathds{#1}}
\newcommand{\eps}{\varepsilon}
\author{%
    Franziska K\"{u}hn\thanks{Institut f\"ur Mathematische Stochastik, Fachrichtung Mathematik, Technische Universit\"at Dresden, 01062 Dresden, Germany, \texttt{franziska.kuehn1@tu-dresden.de}} 
}
\title{Solutions of L\'evy-driven SDEs with unbounded coefficients as Feller processes}
\date{}
\begin{document}

\maketitle

\abstract{\noindent 
Let $(L_t)_{t \geq 0}$ be a $k$-dimensional L\'evy process and $\sigma: \mbb{R}^d \to \mbb{R}^{d \times k}$ a continuous function such that the L\'evy-driven stochastic differential equation (SDE) \begin{equation*} dX_t = \sigma(X_{t-}) \, dL_t, \qquad X_0 \sim \mu \end{equation*} has a unique weak solution. We show that the solution is a Feller process whose domain of the generator contains the smooth functions with compact support if, and only if, the L\'evy measure $\nu$ of the driving L\'evy process $(L_t)_{t \geq 0}$ satisfies \begin{equation*}
	\nu(\{y \in \mbb{R}^k; |\sigma(x)y+x|<r\}) \xrightarrow[]{|x| \to \infty} 0.
\end{equation*}
This generalizes a result by Schilling \& Schnurr \cite{schnurr} which states that the solution to the SDE has this property if $\sigma$ is bounded.
\par\medskip

\noindent\emph{Keywords:} Feller process, stochastic differential equation, unbounded coefficients. \par \medskip

\noindent\emph{MSC 2010:} Primary: 60J35. Secondary: 60H10, 60G51, 60J25, 60J75, 60G44.
}

\section{Introduction}

Feller processes are a natural generalization of L\'evy processes. They behave locally like L\'evy processes, but -- in contrast to L\'evy processes -- Feller processes are, in general, not homogeneous in space. Although there are several techniques to prove existence results for Feller processes, many of them are restricted to Feller processes with bounded coefficients, i.\,e.\ they assume that the symbol is uniformly bounded with respect to the space variable $x$; see \cite{boett11,ltp} for a survey on known results. In fact, there are only few Feller processes with unbounded coefficients which are well studied, including affine processes and the generalized Ornstein--Uhlenbeck process, cf.\ \cite[Example 1.3f),i)]{ltp} and the references therein. In order to get a better understanding of Feller processes with unbounded coefficients, it is important to find further examples. \par
In the present paper, we investigate under which assumptions the solution to the L\'evy-driven stochastic differential equation (SDE)
 \begin{equation}
	dX_t = \sigma(X_{t-}) \, dL_t \qquad X_0 = x. \label{sde0}
\end{equation}
is a Feller process whose domain of the generator contains the smooth functions with compact support, i.\,e.\ a so-called rich Feller process; here $(L_t)_{t \geq 0}$ is a $k$-dimensional L\'evy process and $\sigma: \mbb{R}^d \to \mbb{R}^{d \times k}$ a continuous function such that the SDE has a unique weak solution. If $\sigma$ is bounded and Lipschitz continuous then this follows from a result by Schilling \& Schnurr \cite{schnurr}. On the other hand, it is known that $(X_t)_{t \geq 0}$ may fail to be a Feller process if $\sigma$ is not bounded. For instance, if $(L_t)_{t \geq 0}$ is a Poisson process with jump intensity $\lambda>0$, then it is not difficult to see that the solution to the SDE \begin{equation*}
	dX_t = - X_{t-} \, dL_t, \qquad X_0 =x,
\end{equation*}
is not a Feller process, see \cite[Remark 3.4]{schnurr} for details. Up to now, it was an open problem under which necessary and sufficient conditions the solution to the SDE \eqref{sde0} is a (rich) Feller process if $\sigma$ is not necessarily bounded. We resolve this question by showing that $(X_t)_{t \geq 0}$ is a rich Feller process if, and only if, the L\'evy measure $\nu_L$ of the driving L\'evy process $(L_t)_{t \geq 0}$ satisfies \begin{equation*}
	\nu_L(\{y \in \mbb{R}^k; \sigma(x)y \in B(-x,r)\}) \xrightarrow[]{|x| \to \infty} 0;
\end{equation*}
here $B(-x,r)$ denotes the open ball of radius $r$ centered at $-x$. The symbol of the Feller process is then given by $q(x,\xi)= \psi(\sigma(x)^T \xi)$ where $\psi$ denotes the characteristic exponent of $(L_t)_{t \geq 0}$. This provides us with a new class of Feller processes with unbounded coefficients. \par \medskip

The following result is our main theorem; the required definitions will be explained in Section~\ref{def}.

\begin{thm} \label{sde-0}
	Let $(L_t)_{t \geq 0}$ be a $k$-dimensional L\'evy process with L\'evy triplet $(b_L,Q_L,\nu_L)$ and characteristic exponent $\psi$. Let $\sigma: \mbb{R}^d \to \mbb{R}^{d \times k}$ be a continuous function satisfying the linear growth condition \begin{equation}
		|\sigma(x)| \leq c (1+|x|), \qquad x \in \mbb{R}^d \label{lin}
	\end{equation}
	for some absolute constant $c>0$. Suppose that the SDE \begin{equation}
		dX_{t} = \sigma(X_{t-}) \, dL_t, \qquad X_0 \sim \mu \label{sde}
	\end{equation}
	admits a unique weak solution for each initial distribution $\mu$ Then the weak solution $(X_t)_{t \geq 0}$ to \eqref{sde} 	is a rich $d$-dimensional Feller process if, and only if, \begin{equation}
		\nu_L(\{y \in \mbb{R}^k; \sigma(x) y \in B(-x,r) \}) \xrightarrow[]{|x| \to \infty} 0 \fa r>0. \label{sde-co}
	\end{equation}
	In this case the symbol $q$ of the Feller process $(X_t)_{t \geq 0}$ is given by $q(x,\xi) := \psi(\sigma(x)^T \xi)$ and $C_c^2(\mbb{R}^d)$ is contained in the domain of the generator. 
\end{thm}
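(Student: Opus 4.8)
The plan is to reduce the whole theorem to a single analytic statement about the generator, namely that $A$ maps $C_c^\infty(\mbb{R}^d)$ into $C_\infty(\mbb{R}^d)$, and then to show this is equivalent to \eqref{sde-co}. First I would record the preliminaries. Since \eqref{sde} has a unique weak solution for every initial law, the family $(\mbb{P}^x)_{x\in\mbb{R}^d}$ is a (strong) Markov family, and Itô's formula applied to $f(X_t)$ for $f\in C_c^2(\mbb{R}^d)$ shows that $M_t^f:=f(X_t)-f(X_0)-\int_0^t Af(X_s)\,ds$ is a martingale, where
\[
	Af(x)=b_L\cdot\sigma(x)^T\nabla f(x)+\tfrac12\tr\!\big(\sigma(x)Q_L\sigma(x)^T\nabla^2 f(x)\big)+\int_{\mbb{R}^k}\!\Big(f(x+\sigma(x)z)-f(x)-\nabla f(x)\cdot\sigma(x)z\,\I_{\{|z|<1\}}\Big)\,\nu_L(dz).
\]
The linear growth \eqref{lin} together with $\int_{\{|z|<1\}}|z|^2\,\nu_L(dz)<\infty$ guarantees that $Af$ is well defined and bounded, and a Fourier computation identifies $A$ as the pseudo-differential operator with symbol $q(x,\xi)=\psi(\sigma(x)^T\xi)$, as claimed. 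Thus $X$ solves the $(A,C_c^2)$-martingale problem.

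The key observation is the equivalence $\eqref{sde-co}\Longleftrightarrow Af\in C_\infty(\mbb{R}^d)$ for all $f\in C_c^\infty(\mbb{R}^d)$. To see it, fix $f\in C_c^\infty$ with $\supp f\subseteq B(0,\rho)$. For $|x|>\rho$ the local terms vanish because $f(x)=0$, $\nabla f(x)=0$ and $\nabla^2 f(x)=0$, so $Af(x)=\int_{\mbb{R}^k}f(x+\sigma(x)z)\,\nu_L(dz)$ and hence $|Af(x)|\le\|f\|_\infty\,\nu_L(\{z;\sigma(x)z\in B(-x,\rho)\})$. By \eqref{lin} this set is contained in $\{|z|\ge\text{const}\}$ for large $|x|$, so it has finite $\nu_L$-measure and, under \eqref{sde-co}, that measure tends to $0$; together with continuity of $Af$ on compacts (from continuity of $\sigma$ and dominated convergence) this gives $Af\in C_\infty$. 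Conversely, choosing $f\ge\I_{B(0,r)}$ yields $Af(x)\ge\nu_L(\{z;\sigma(x)z\in B(-x,r)\})$ for large $|x|$, so $Af\in C_\infty$ forces \eqref{sde-co}.

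Necessity of \eqref{sde-co} is then immediate. If $X$ is a rich Feller process with generator $\mc{A}$, then $C_c^\infty\subseteq\dom\mc{A}$ and $\mc{A}f\in C_\infty$. Comparing the Dynkin martingale for $\mc{A}$ with $M^f$ shows that $\int_0^t(Af-\mc{A}f)(X_s)\,ds$ is a continuous martingale of bounded variation starting at $0$, hence identically $0$; dividing by $t$ and letting $t\downarrow0$ (using right continuity of paths and continuity of $Af,\mc{A}f$) gives $\mc{A}f=Af$ on $C_c^\infty$. In particular $Af\in C_\infty$, and the lower bound of the previous paragraph yields \eqref{sde-co}.

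For the sufficiency direction, assume \eqref{sde-co}, so that $A(C_c^\infty)\subseteq C_\infty$. I would establish the three defining properties of the Feller semigroup $T_tf(x)=\mbb{E}^xf(X_t)$. Positivity and contractivity are clear, and strong continuity follows once $T_t$ preserves $C_\infty$, since taking expectations in $M^f$ gives $T_tf-f=\int_0^t T_s(Af)\,ds$ and hence $\|T_tf-f\|_\infty\le t\|Af\|_\infty\to0$ for $f\in C_c^\infty$, then for all of $C_\infty$ by density. The heart of the matter, and the main obstacle, is that $T_t$ maps $C_\infty$ into itself. Continuity of $x\mapsto T_tf(x)$ I would obtain by a Stroock--Varadhan argument: by \eqref{lin} the laws $\mbb{P}^{x_n}$ are tight when $x_n\to x$, every weak limit solves the $(A,C_c^\infty)$-martingale problem started at $x$ (passing to the limit since $Af$ is bounded and continuous), and weak uniqueness identifies the limit as $\mbb{P}^x$. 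The genuinely delicate point is the decay $\mbb{E}^xf(X_t)\xrightarrow{|x|\to\infty}0$ for $f\in C_c$: here \eqref{sde-co} is exactly what bounds the $\nu_L$-rate at which a jump $\sigma(X_{s-})\Delta L_s$ can transport the process from a neighbourhood of infinity into a fixed compact set, while the multiplicative (linear-growth) structure of the continuous part prevents it from returning from infinity into a fixed compact set in finite time --- as already illustrated by geometric Brownian motion. Quantitatively I would localise with the first entrance time into a large ball and combine \eqref{sde-co} (controlling the jump mechanism) with a scaling and maximal-inequality estimate for the symbol $q(x,\xi)=\psi(\sigma(x)^T\xi)$ that exploits \eqref{lin} (controlling the continuous mechanism), to bound $\mbb{P}^x(X_t\in K)$. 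Once $T_t\colon C_\infty\to C_\infty$ is established, $X$ is a rich Feller process whose generator extends $(A,C_c^2)$, giving both the stated symbol and the inclusion $C_c^2(\mbb{R}^d)\subseteq\dom$. It is precisely this $C_\infty$-preservation, in the absence of any boundedness of $\sigma$, that distinguishes the present result from the bounded-coefficient case of Schilling \& Schnurr \cite{schnurr}.
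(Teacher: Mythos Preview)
Your treatment of the equivalence $\eqref{sde-co}\Longleftrightarrow A(C_c^\infty)\subseteq C_\infty$ and of the necessity direction is correct and matches the paper (the paper packages the equivalence as Lemma~\ref{sde-5} and identifies $Lf=Af$ via Dynkin's characteristic operator rather than your bounded-variation martingale trick, but the content is the same).

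The gap is in sufficiency, precisely at the point you flag as ``genuinely delicate'': showing $T_tf(x)\to0$ as $|x|\to\infty$. Your proposed tool, a maximal/exit-time inequality for the symbol, does \emph{not} work under mere linear growth. The standard estimate reads
\[
\mbb{P}^x\Big(\sup_{s\le t}|X_s-x|\ge r\Big)\le c\,t\sup_{|y-x|\le r}\ \sup_{|\xi|\le r^{-1}}|\psi(\sigma(y)^T\xi)|,
\]
and to force $X_t$ into a fixed compact set from $x$ you must take $r\sim|x|$. Then $|y|\lesssim|x|$, $|\sigma(y)|\lesssim|x|$ by \eqref{lin}, and $|\sigma(y)^T\xi|$ stays of order~$1$; the right-hand side does not tend to~$0$. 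This approach only succeeds for \emph{sublinear} $\sigma$ (cf.\ Example~\ref{sde-10}). Nor can one apply a Lyapunov function $f(x)=(1+|x|^2)^{-1}$ directly to the full generator: for large jumps the term $\int_{|z|>1}f(x+\sigma(x)z)\,\nu_L(dz)$ is only $o(1)$ by \eqref{sde-co}, not $O(f(x))$, so Gronwall fails.

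The paper resolves this by an interlacing/perturbation scheme that you do not mention. One first removes the jumps of $(L_t)$ of modulus $>r$, with $r$ chosen so small that $|\sigma(x)y|\le\tfrac12(1+|x|)$ for $|y|\le r$; for the truncated SDE the Lyapunov bound $|Bf|\le Cf$ \emph{does} hold, Gronwall gives $\mbb{P}^x(|Y_t|\le R)\le(1+R^2)f(x)e^{Ct}\to0$, and one checks $(Y_t)$ is a rich Feller process with generator $B$ (this is Theorem~\ref{sde-13}, Steps~1--2). The large jumps are then restored by the bounded perturbation $Nu(x)=\int_{|z|>r}(u(x+\sigma(x)z)-u(x))\,\nu_L(dz)$; here \eqref{sde-co} is used once more, to show $N$ maps $C_\infty$ to $C_\infty$, so that the perturbed semigroup is again Feller. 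Without this truncation step the $C_\infty$-preservation cannot be obtained from the ingredients you list.
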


For some classes of L\'evy processes the existence of a unique weak solution to \eqref{sde} can be proved under rather weak regularity assumptions on $\sigma$, typically H{\"o}lder continuity. \par

For Lipschitz continuous functions $\sigma$ we obtain the following corollary.

\begin{kor} \label{sde-1}
	Let $(L_t)_{t \geq 0}$ be a $k$-dimensional L\'evy process with L\'evy triplet $(b_L,Q_L,\nu_L)$ and characteristic exponent $\psi$. If $\sigma: \mbb{R}^d \to \mbb{R}^{d \times k}$ is globally Lipschitz continuous, then the (strong) solution to the SDE \eqref{sde} is a rich $d$-dimensional Feller process if, and only if, \eqref{sde-co} holds. In this case the the symbol $q$ of the Feller process $(X_t)_{t \geq 0}$ is given by $q(x,\xi) := \psi(\sigma(x)^T \xi)$ and $C_c^2(\mbb{R}^d)$ is contained in the domain of the generator.
\end{kor}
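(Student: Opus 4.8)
The plan is to derive the corollary directly from Theorem~\ref{sde-0} by checking that a globally Lipschitz continuous $\sigma$ satisfies all of its hypotheses. That theorem requires two things: the linear growth condition \eqref{lin}, and the existence of a unique weak solution of \eqref{sde} for every initial distribution $\mu$. Both should follow from standard facts, so I expect the corollary to be essentially a matter of verifying hypotheses rather than proving anything new.

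First I would record the linear growth condition. If $\sigma$ is globally Lipschitz with constant $L$, then for every $x \in \mbb{R}^d$ one has $|\sigma(x)| \leq |\sigma(x)-\sigma(0)| + |\sigma(0)| \leq L|x| + |\sigma(0)| \leq c(1+|x|)$ with $c := \max\{L,|\sigma(0)|\}$, which is precisely \eqref{lin}. Next I would invoke the classical well-posedness theory for SDEs driven by semimartingales (in particular by the Lévy process $(L_t)_{t \geq 0}$): under a global Lipschitz condition on $\sigma$ the SDE \eqref{sde} admits, for each initial condition, a pathwise unique strong solution. Pathwise uniqueness together with weak existence yields uniqueness in law via the Yamada--Watanabe argument, so the SDE has a unique weak solution for every initial distribution $\mu$, and this weak solution coincides with the strong solution. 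This supplies the remaining hypothesis of Theorem~\ref{sde-0}.

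With both hypotheses in place, Theorem~\ref{sde-0} applies without modification and gives that $(X_t)_{t \geq 0}$ is a rich $d$-dimensional Feller process if and only if \eqref{sde-co} holds, with symbol $q(x,\xi) = \psi(\sigma(x)^T \xi)$ and $C_c^2(\mbb{R}^d)$ contained in the domain of the generator. I anticipate no substantive obstacle: the only nontrivial external input is the classical strong existence-and-uniqueness theorem for Lipschitz coefficients, and all of the analytic content of the statement is already carried by Theorem~\ref{sde-0}. The one point worth stating carefully is that strong well-posedness indeed implies the \emph{weak} uniqueness demanded by the theorem, so that the strong solution referred to in the corollary is the object to which the theorem is applied.
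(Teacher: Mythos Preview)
Your proposal is correct and follows essentially the same route as the paper: verify that global Lipschitz continuity yields both the linear growth condition \eqref{lin} and (via the classical strong existence/pathwise uniqueness theorem together with the Yamada--Watanabe-type implication ``pathwise uniqueness $\Rightarrow$ uniqueness in law'') a unique weak solution for every initial distribution, and then apply Theorem~\ref{sde-0}. The only difference is that you spell out the linear growth bound explicitly, whereas the paper leaves it implicit.
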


Before we prove the results, let us give some intuition what condition \eqref{sde-co} means. For simplicity consider the one-dimensional case, i.\,e.\ $k=d=1$. Then \begin{align*}
	\sigma(x) y \in B(-x,r) &\stackrel{\sigma(x) \neq 0}{\iff} \exists z \in (-r,r): y = -\frac{x}{\sigma(x)}+ \frac{z}{\sigma(x)}.
\end{align*}
Assuming that $|\sigma(x)| \to \infty$ as $|x| \to \infty$, we have $z/\sigma(x) \to 0$ as $|x| \to \infty$, and therefore, heuristically,  \begin{align}
		\nu_L(\{y \in \mbb{R}; \sigma(x) y \in B(-x,r)\}) \approx \nu_L \left( \left\{- \frac{x}{\sigma(x)} \right\} \right) \fa |x| \gg 1. \label{sde-eq5}
\end{align}
There are two cases: \begin{enumerate}
	\item $|x|/|\sigma(x)| \xrightarrow[]{|x| \to \infty} \infty$, i.\,e.\ $\sigma$ is of sublinear growth. In this case, it follows easily from \eqref{sde-eq5} and the dominated convergence theorem that \eqref{sde-co} is automatically satisfied, see Example~\ref{sde-10} for details.
	\item $|x|/|\sigma(x)|$ does not converge to $\infty$ as $|x| \to \infty$. Then \eqref{sde-eq5} shows that \eqref{sde-co} holds if, and only if, $\nu_L$ does not concentrate mass on accumulation points of $-x/\sigma(x)$. This is, in particular, satisfied if the L\'evy measure $\nu_L$ does not have atoms in the closure of the set $\{-x/\sigma(x); |x| \geq R\}$ for $R \gg 1$, see Example~\ref{sde-12} and Example~\ref{sde-14}.
\end{enumerate}

\section{Preliminaries} \label{def}

We consider the Euclidean space $\mbb{R}^d$ endowed with the canonical scalar product $x \cdot y = \sum_{j=1}^d x_j y_j$ and the Borel-$\sigma$-algebra $\mc{B}(\mbb{R}^d)$. We denote as above the open ball of radius $r$ centered at $x$ by $B(x,r)$. We use $C_c^2(\mbb{R}^d)$ to denote the space of twice continuously differentiable functions with compact support and $C_b(\mbb{R}^d)$ is the space of continuous bounded functions $f: \mbb{R}^d \to \mbb{R}$. \par
A $d$-dimensional Markov process $(\Omega,\mc{A},\mbb{P}^x,x \in \mbb{R}^d,X_t,t \geq 0)$ with \cadlag (right-continuous with left-hand limits) sample paths is called a \emph{Feller process} if the associated semigroup $(T_t)_{t \geq 0}$ defined by \begin{equation*}
	T_t f(x) := \mbb{E}^x f(X_t), \quad x \in \mbb{R}^d, f \in \mc{B}_b(\mbb{R}^d) := \{f: \mbb{R}^d \to \mbb{R}; \text{$f$ bounded, Borel measurable}\}
\end{equation*}
has the \emph{Feller property} and $(T_t)_{t \geq 0}$ is \emph{strongly continuous at $t=0$}, i.\,e. $T_t f \in C_{\infty}(\mbb{R}^d)$ for all $C_{\infty}(\mbb{R}^d)$ and $\|T_tf-f\|_{\infty} \xrightarrow[]{t \to 0} 0$ for any $f \in C_{\infty}(\mbb{R}^d)$. Here, $C_{\infty}(\mbb{R}^d)$ denotes the space of continuous functions vanishing at infinity. If the smooth functions with compact support $C_c^{\infty}(\mbb{R}^d)$ are contained in the domain of the generator $(L,\mc{D}(L))$, then we speak of a \emph{rich} Feller process. A result due to von Waldenfels and Courr\`ege, cf.\ \cite[Theorem 2.21]{ltp}, states that the generator $L$ of a rich Feller process is, when restricted to $C_c^{\infty}(\mbb{R}^d)$, a pseudo-differential operator with negative definite symbol: \begin{equation*}
	Lf(x) =  - \int_{\mbb{R}^d} e^{i \, x \cdot \xi} q(x,\xi) \hat{f}(\xi) \, d\xi, \qquad f \in C_c^{\infty}(\mbb{R}^d), \, x \in \mbb{R}^d
\end{equation*}
where $\hat{f}(\xi) := (2\pi)^{-d} \int_{\mbb{R}^d} e^{-ix \xi} f(x) \, dx$ denotes the Fourier transform of $f$ and \begin{equation}
	q(x,\xi) = q(x,0) - i b(x) \cdot \xi + \frac{1}{2} \xi \cdot Q(x) \xi + \int_{\mbb{R}^d \backslash \{0\}} (1-e^{i y \cdot \xi}+ i y\cdot \xi \I_{(0,1)}(|y|)) \, \nu(x,dy). \label{cndf}
\end{equation}
We call $q$ the \emph{symbol} of the rich Feller process $(X_t)_{t \geq 0}$ and $-q$ the symbol of the pseudo-differential operator. For each fixed $x \in \mbb{R}^d$, $(b(x),Q(x),\nu(x,dy))$ is a L\'evy triplet, i.\,e.\ $b(x) \in \mbb{R}^d$, $Q(x) \in \mbb{R}^{d \times d}$ is a symmetric positive semidefinite matrix and $\nu(x,dy)$ a $\sigma$-finite measure on $(\mbb{R}^d \backslash \{0\},\mc{B}(\mbb{R}^d \backslash \{0\}))$ satisfying $\int_{y \neq 0} \min\{|y|^2,1\} \, \nu(x,dy)<\infty$. A point $x \in \mbb{R}^d$ is called \emph{absorbing} if $\mbb{P}^x(X_t=x)=1$ for all $x \in \mbb{R}^d$. If $(X_t)_{t \geq 0}$ is a Feller process and $x \in \mbb{R}^d$ is not absorbing, then the exit time $\tau_r^x := \inf\{t \geq 0; |X_t-x| \geq r\}$ satisfies $\mbb{E}^x \tau_r^x<\infty$ for $r>0$ sufficiently small, cf.\ \cite[Lemma 7.24]{bm2}. \emph{Dynkin's characteristic operator} is the linear operator defined by \begin{equation}
	\mathfrak{L}f(x) := \begin{cases} \lim_{r \to 0} \dfrac{1}{\mbb{E}^x \tau_r^x} \big(\mbb{E}^x f(X_{\tau_r^x})-f(x)\big), & \text{$x$ is not absorbing}, \\ 0, & \text{$x$ is absorbing}, \end{cases} \label{dynkin}
\end{equation}
on the domain $\mc{D}(\mathfrak{L})$ consisting of all functions $f \in \mc{B}_b(\mbb{R}^d)$ such that the limit \eqref{dynkin} exists for all $x \in \mbb{R}^d$. Our standard reference for Feller processes are the monographs \cite{jac1,jac2,jac3,ltp}. \par
A \emph{L\'evy process} $(L_t)_{t \geq 0}$ is a rich Feller process whose symbol $q$ does not depend on $x$. This is equivalent to saying that $(L_t)_{t \geq 0}$ has stationary and independent increments and \cadlag sample paths. The symbol $q=q(\xi)$ (also called \emph{characteristic exponent}) and the L\'evy process $(L_t)_{t \geq 0}$ are related through the L\'evy--Khintchine formula: \begin{equation*}
 \mbb{E}^xe^{i \xi \cdot (L_t-x)} = e^{-t q(\xi)} \fa t \geq 0, \, x,\xi \in \mbb{R}^d.
\end{equation*}
\emph{Weak uniqueness} holds for the \emph{L\'evy-driven stochastic differential equation} (SDE, for short) \begin{equation*}
	dX_t = \sigma(X_{t-}) \, dL_t, \qquad X_0 \sim \mu,
\end{equation*}
if any two weak solutions of the SDE have the same finite-dimensional distributions. We speak of \emph{pathwise uniqueness} if any two strong solutions $(X_t^{(1)})_{t \geq 0}$ and $(X_t^{(2)})_{t \geq 0}$ with $X_0^{(1)}=X_0^{(2)}$ are indistinguishable, i.\,e.\ $\mbb{P}(\forall t \geq 0: X_t^{(1)}=X_t^{(2)})=1$. We refer the reader to the Ikeda \& Watanabe \cite{ikeda} and Protter \cite{protter} for further details. \par
Let $(A,\mc{D})$ be a linear operator with domain $\mc{D} \subseteq \mc{B}_b(\mbb{R}^d)$ and $\mu$ a probability measure on $(\mbb{R}^d,\mc{B}(\mbb{R}^d))$. A $d$-dimensional stochastic process $(X_t)_{t \geq 0}$ with \cadlag sample paths is a \emph{solution to the $(A,\mc{D})$-martingale problem with initial distribution $\mu$} if $X_0 \sim \mu$ and \begin{equation*}
	M_t^f := f(X_t)-f(X_0)- \int_0^t Af(X_s) \, ds, \qquad t \geq 0,
\end{equation*}
is a martingale with respect to the canonical filtration of $(X_t)_{t \geq 0}$ for any $f \in \mc{D}$. The $(A,\mc{D})$-martingale problem is \emph{well-posed} if for any initial distribution $\mu$ there exists a unique (in the sense of finite-dimensional distributions) solution to the $(A,\mc{D})$-martingale problem with initial distribution $\mu$. For a comprehensive study of martingale problems see Ethier \& Kurtz \cite[Chapter 4]{ethier}.

\section{Proofs}

Let us recall the connection between Dynkin's characteristic operator, cf.\ \eqref{dynkin}, and the infinitesimal generator of a Feller process $(X_t)_{t \geq 0}$, cf.\ \cite[Theorem 7.35]{bm2}.

\begin{thm} \label{sde-7}
	Let $(X_t)_{t \geq 0}$ be a Feller process with infinitesimal generator $(L,\mc{D}(L))$ and characteristic operator $(\mathfrak{L},\mc{D}(\mathfrak{L}))$. If $f \in \mc{D}(\mathfrak{L}) \cap C_{\infty}(\mbb{R}^d)$ is such that $\mathfrak{L}f \in C_{\infty}(\mbb{R}^d)$, then $f \in \mc{D}(L)$ and $Lf = \mathfrak{L}f$.
\end{thm}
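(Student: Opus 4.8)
The plan is to deduce the statement from the resolvent of the Feller semigroup together with a maximum principle for the characteristic operator $\mathfrak{L}$. Write $(T_t)_{t\geq 0}$ for the Feller semigroup and, for $\lambda>0$, let $R_\lambda g := \int_0^\infty e^{-\lambda t} T_t g\,dt$ denote the resolvent. By Hille--Yosida theory $R_\lambda$ maps $C_\infty(\mbb{R}^d)$ bijectively onto $\mc{D}(L)$ and satisfies $(\lambda-L)R_\lambda g = g$ for all $g\in C_\infty(\mbb{R}^d)$. First I would record the ``easy'' inclusion $\mc{D}(L)\subseteq \mc{D}(\mathfrak{L})$ with $\mathfrak{L}=L$ on $\mc{D}(L)$: for $h\in \mc{D}(L)$ Dynkin's formula gives $\mbb{E}^x h(X_{\tau_r^x})-h(x)=\mbb{E}^x\int_0^{\tau_r^x}Lh(X_s)\,ds$, and dividing by $\mbb{E}^x\tau_r^x$ and letting $r\to 0$ yields $\mathfrak{L}h(x)=Lh(x)$ at every non-absorbing $x$ by continuity of $Lh$ and right-continuity of the paths; at absorbing points both operators vanish.

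Now fix $f\in\mc{D}(\mathfrak{L})\cap C_\infty(\mbb{R}^d)$ with $\mathfrak{L}f\in C_\infty(\mbb{R}^d)$, fix $\lambda>0$, and set $g:=\lambda f-\mathfrak{L}f\in C_\infty(\mbb{R}^d)$ and $h:=R_\lambda g\in\mc{D}(L)$. Using the easy inclusion and the linearity of $\mathfrak{L}$, the function $u:=h-f\in\mc{D}(\mathfrak{L})\cap C_\infty(\mbb{R}^d)$ satisfies $\mathfrak{L}u=\mathfrak{L}h-\mathfrak{L}f=Lh-\mathfrak{L}f=(\lambda h-g)-\mathfrak{L}f=\lambda h-\lambda f=\lambda u$. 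Thus everything reduces to showing that $\mathfrak{L}u=\lambda u$ with $\lambda>0$ and $u\in C_\infty(\mbb{R}^d)$ forces $u\equiv 0$; granting this we obtain $f=h=R_\lambda g\in\mc{D}(L)$ and $Lf=Lh=\lambda h-g=\mathfrak{L}f$, as claimed.

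The main step, and the one I expect to be the real obstacle, is this uniqueness/maximum principle. Suppose $u\not\equiv 0$; replacing $u$ by $-u$ if necessary (which again satisfies $\mathfrak{L}(-u)=\lambda(-u)$) we may assume $M:=\sup_x u(x)>0$. Since $u\in C_\infty(\mbb{R}^d)$ the value $M$ is attained at some $x_0\in\mbb{R}^d$. If $x_0$ is absorbing then $\mathfrak{L}u(x_0)=0$, whence $\lambda u(x_0)=0$ and $u(x_0)=0<M$, a contradiction. If $x_0$ is not absorbing then $\mbb{E}^{x_0}\tau_r^{x_0}<\infty$ for $r>0$ small, and since $u(X_{\tau_r^{x_0}})\leq M=u(x_0)$ pointwise, each difference quotient in \eqref{dynkin} is $\leq 0$; letting $r\to 0$ gives $\mathfrak{L}u(x_0)\leq 0$, contradicting $\mathfrak{L}u(x_0)=\lambda u(x_0)=\lambda M>0$. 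Hence $\sup_x u\leq 0$, and applying the same argument to $-u$ gives $\inf_x u\geq 0$, so $u\equiv 0$.

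The delicate points are checking that the extremum is genuinely attained, for which $u\in C_\infty(\mbb{R}^d)$ is essential, and justifying the sign estimate at $x_0$ in the limit $r\to 0$, for which the finiteness $\mbb{E}^{x_0}\tau_r^{x_0}<\infty$ at non-absorbing points recorded in Section~\ref{def} is exactly what is needed. The reduction itself is formal once the easy inclusion $\mc{D}(L)\subseteq\mc{D}(\mathfrak{L})$ and the resolvent identity are in place; the substance of the argument lies entirely in the maximum principle for $\mathfrak{L}$.
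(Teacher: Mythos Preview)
Your argument is correct and is, in fact, the classical route to this result (resolvent reduction plus a positive-maximum principle for $\mathfrak{L}$). Note, however, that the paper does not give its own proof of this theorem: it is quoted as a known fact with a reference to \cite[Theorem 7.35]{bm2}, and the proof there proceeds along exactly the lines you outline. So there is nothing to compare---your proof simply fills in what the paper cites.

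Two minor remarks on presentation. First, in the ``easy inclusion'' you use Dynkin's formula $\mbb{E}^x h(X_{\tau_r^x})-h(x)=\mbb{E}^x\int_0^{\tau_r^x}Lh(X_s)\,ds$; this is valid because $\mbb{E}^x\tau_r^x<\infty$ for small $r$ at non-absorbing points (which you correctly invoke from Section~\ref{def}) and because $Lh\in C_\infty(\mbb{R}^d)$ for $h\in\mc{D}(L)$, so the integrand is bounded. Second, when you pass to the limit in the difference quotient you implicitly use that $|X_s-x|<r$ for all $s<\tau_r^x$, hence $|Lh(X_s)-Lh(x)|\leq\sup_{|y-x|<r}|Lh(y)-Lh(x)|\to 0$; it is worth making this explicit. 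Otherwise the write-up is clean, and the identification of the ``delicate points'' (attainment of the extremum via $u\in C_\infty$, and the sign inequality via $\mbb{E}^{x_0}\tau_r^{x_0}<\infty$) is exactly right.
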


For the proof of Theorem~\ref{sde-0} we need some auxiliary statements.

\begin{lem} \label{sde-5}
	Let $(q(x,\cdot))_{x \in \mbb{R}^d}$ be a family of continuous negative definite functions (i.\,e.\ functions of the form \eqref{cndf}) with $q(x,0)=0$ and denote by $(b(x),Q(x),\nu(x,dy))_{x \in \mbb{R}^d}$ the associated family of L\'evy triplets. Assume that $x \mapsto q(x,\xi)$ is continuous for all $\xi \in \mbb{R}^d$ and that $q$ is locally bounded in $x$, i.\,e.\ for any compact set $K \subseteq \mbb{R}^d$ there exists $C>0$ such that $|q(x,\xi)| \leq C (1+|\xi|^2)$ for all $x \in K$, $\xi \in \mbb{R}^d$. For the pseudo-differential operator $A$ with symbol $-q$ the following statements are equivalent: \begin{enumerate}
		\item\label{sde-5-ii} $A$ maps $C_c^{\infty}(\mbb{R}^d)$ into $C_{\infty}(\mbb{R}^d)$.
		\item\label{sde-5-iii} $\nu(x,B(-x,r)) \xrightarrow[]{|x| \to \infty} 0$ for any $r>0$.
	\end{enumerate}
	Both conditions are, in particular, satisfied if \begin{equation}
		\lim_{r \to \infty} \sup_{|x| \leq r} \sup_{|\xi| \leq r^{-1}} |q(x,\xi)| = 0. \label{sde-eq9}
	\end{equation}
\end{lem}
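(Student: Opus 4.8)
The plan is to work with the integro-differential representation of $A$ rather than the Fourier form directly. For $f \in C_c^{\infty}(\mbb{R}^d)$, inverting the Fourier transform in \eqref{cndf} (using $q(x,0)=0$) gives
\[
Af(x) = b(x)\cdot\nabla f(x) + \tfrac{1}{2}\tr\big(Q(x) D^2 f(x)\big) + \int_{\mbb{R}^d\setminus\{0\}}\big(f(x+y)-f(x)-y\cdot\nabla f(x)\,\I_{(0,1)}(|y|)\big)\,\nu(x,dy),
\]
the integral converging absolutely by Taylor's theorem and $\int\min\{|y|^2,1\}\,\nu(x,dy)<\infty$. First I would record that $Af$ is \emph{always} continuous: from the Fourier form, $|q(x,\xi)|\,|\hat f(\xi)|$ is dominated on each compact set of $x$-values by $C_K(1+|\xi|^2)|\hat f(\xi)|$, which is integrable since $\hat f$ is Schwartz, so continuity of $x\mapsto q(x,\xi)$ together with dominated convergence yields $Af\in C(\mbb{R}^d)$. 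Hence the content of \ref{sde-5-ii} is precisely that $Af$ \emph{vanishes at infinity} for every $f\in C_c^{\infty}(\mbb{R}^d)$, and the whole equivalence reduces to matching this with \ref{sde-5-iii}.

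The two directions then follow from a localisation trick. Fix $f\in C_c^{\infty}(\mbb{R}^d)$ with $\operatorname{supp}f\subseteq B(0,R)$. For $|x|>R$ the local terms $f(x),\nabla f(x),D^2f(x)$ all vanish, so
\[
Af(x)=\int_{\mbb{R}^d\setminus\{0\}}f(x+y)\,\nu(x,dy)=\int_{B(-x,R)}f(x+y)\,\nu(x,dy),
\]
since $f(x+y)\neq0$ forces $x+y\in B(0,R)$, i.e.\ $y\in B(-x,R)$. For \ref{sde-5-iii}$\Rightarrow$\ref{sde-5-ii} this gives $|Af(x)|\le\|f\|_{\infty}\,\nu(x,B(-x,R))\to0$. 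For the converse I would, given $r>0$, pick $f$ with $0\le f\le1$, $f\equiv1$ on $B(0,r)$ and $\operatorname{supp}f\subseteq B(0,2r)$; then for $|x|>2r$ the same identity yields $Af(x)\ge\int_{B(-x,r)}1\,\nu(x,dy)=\nu(x,B(-x,r))$, and since $Af\in C_{\infty}(\mbb{R}^d)$ the left-hand side tends to $0$, forcing \ref{sde-5-iii}. As $r>0$ was arbitrary, this is the full statement.

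Finally, to show \eqref{sde-eq9} is sufficient I would pass through \ref{sde-5-iii} via a L\'evy-measure tail estimate. Averaging $\re q(x,\cdot)=\tfrac12\xi\cdot Q(x)\xi+\int(1-\cos(y\cdot\xi))\,\nu(x,dy)$ over the ball $B(0,\rho)$ and using Fubini, the kernel $h(|y|\rho):=|B(0,\rho)|^{-1}\int_{B(0,\rho)}(1-\cos(y\cdot\xi))\,d\xi$ appears; since $h$ is continuous, positive on $(0,\infty)$ and tends to $1$ at infinity (decay of the Fourier transform of the ball indicator), one fixes a dimensional constant $\kappa_d\in(0,1)$ and $c_d:=\inf_{t\ge\kappa_d}h(t)>0$, whence
\[
\nu\big(x,\{|y|\ge\kappa_d/\rho\}\big)\le\frac{1}{c_d}\,\frac{1}{|B(0,\rho)|}\int_{B(0,\rho)}\re q(x,\xi)\,d\xi\le\frac{1}{c_d}\sup_{|\xi|\le\rho}|q(x,\xi)|.
\]
As $B(-x,r)\subseteq\{|y|>|x|-r\}$ for $|x|>r$, the choice $\rho=\kappa_d/(|x|-r)$ gives $\nu(x,B(-x,r))\le c_d^{-1}\sup_{|\xi|\le 1/R}|q(x,\xi)|$ with $R:=(|x|-r)/\kappa_d$; since $\kappa_d<1$ we have $R\ge|x|$ for $|x|$ large, so this is bounded by $\sup_{|x'|\le R}\sup_{|\xi|\le1/R}|q(x',\xi)|$, which vanishes as $R\to\infty$ by \eqref{sde-eq9}. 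I expect this last step to be the main obstacle: one must establish the tail estimate with an explicit $\kappa_d$ strictly below $1$ so that the reciprocal spatial/frequency scales in \eqref{sde-eq9} line up, and verify $c_d>0$, i.e.\ that the averaged cosine kernel stays bounded away from $1$ at the relevant scale.
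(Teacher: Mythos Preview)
The paper does not actually prove this lemma; it simply refers the reader to \cite[Theorem 1.27]{diss} and \cite[Lemma 3.26]{ltp}. So there is no in-paper argument to compare against, and your write-up in fact supplies a complete, self-contained proof.

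Your argument is correct. The reduction to $Af(x)=\int_{B(-x,R)}f(x+y)\,\nu(x,dy)$ once $|x|>R\supseteq\operatorname{supp}f$ is exactly the mechanism behind the equivalence, and the bump-function lower bound for \ref{sde-5-ii}$\Rightarrow$\ref{sde-5-iii} is the standard way to extract the L\'evy-measure mass from $Af$. For the sufficiency of \eqref{sde-eq9}, the averaged-cosine tail estimate you describe is the usual truncation-type inequality; your stated ``main obstacle'' is not a genuine one: for \emph{any} fixed $\kappa_d\in(0,1)$ one has $c_d=\inf_{t\ge\kappa_d}h(t)>0$ automatically, because $h$ is continuous, strictly positive on $(0,\infty)$, and tends to $1$ at infinity. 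Taking, say, $\kappa_d=\tfrac12$, the inequality $R=(|x|-r)/\kappa_d\ge|x|$ holds as soon as $|x|\ge 2r$, and the scaling in \eqref{sde-eq9} lines up exactly as you indicate.
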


For a proof of Lemma~\ref{sde-5} see \cite[Theorem 1.27]{diss} or \cite[Lemma 3.26]{ltp}.

\begin{lem} \label{sde-6}
	Let $\sigma: \mbb{R}^d \to \mbb{R}^{d \times k}$ be a measurable locally bounded mapping and let $(L_t)_{t \geq 0}$ be a L\'evy process with characteristic exponent $\psi$. If the pseudo-differential operator $A$ with symbol $-q(x,\xi) := - \psi(\sigma(x)^T \xi)$ maps $C_c^{\infty}(\mbb{R}^d)$ into $C_b(\mbb{R}^d)$, then the following statements are equivalent. \begin{enumerate}
		\item $(X_t)_{t \geq 0}$ is a weak solution to the SDE \begin{equation}
			dX_t = \sigma(X_{t-}) \, dL_t, \qquad X_0 \sim \mu  \label{sde-eq10}
		\end{equation}
		\item $(X_t)_{t \geq 0}$ is a solution to the $(A,C_c^{\infty}(\mbb{R}^d))$-martingale problem with initial distribution $\mu$. 
	\end{enumerate}
\end{lem}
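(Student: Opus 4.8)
The plan is to establish the equivalence by connecting the SDE to its associated martingale problem via Itô's formula, and then to exploit the fact that the pseudo-differential operator $A$ coincides with the generator of the driving Lévy process composed with $\sigma$.

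For the direction (i) $\Rightarrow$ (ii), I would start from a weak solution $(X_t)_{t \geq 0}$ to \eqref{sde-eq10} and apply Itô's formula for semimartingales with jumps to $f(X_t)$ for $f \in C_c^\infty(\mbb{R}^d)$. Writing out the stochastic integral against $dL_t$ and separating the drift, Gaussian, and jump parts of the Lévy process via its Lévy triplet $(b_L, Q_L, \nu_L)$, the finite-variation (drift) part of $f(X_t)-f(X_0)$ should collect exactly into $\int_0^t Af(X_s)\,ds$, where $Af(x)$ is computed from the Lévy--Khintchine data of $L$ transported through $\sigma(x)$. The key computation is to verify that the symbol arising this way is precisely $-q(x,\xi)=-\psi(\sigma(x)^T\xi)$: this is the standard fact that if $L$ has characteristic exponent $\psi$, then the generator of $\sigma(x_{-})\,dL$ has symbol $\psi(\sigma(x)^T\xi)$, because the increment $\sigma(x)\,dL$ contributes $e^{i\xi\cdot\sigma(x)\,dL}=e^{i(\sigma(x)^T\xi)\cdot dL}$. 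The remaining martingale part of Itô's formula (the local martingale from the compensated jumps and the Brownian integral) then identifies $M_t^f$ as a local martingale; boundedness of $f$, together with the hypothesis that $Af\in C_b(\mbb{R}^d)$, upgrades this to a genuine martingale, so $(X_t)$ solves the $(A,C_c^\infty)$-martingale problem.

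For the converse (ii) $\Rightarrow$ (i), I would use the standard equivalence between martingale problems and weak solutions of SDEs. The cleanest route is to show that any solution of the $(A,C_c^\infty)$-martingale problem has the same finite-dimensional distributions characterized by the symbol $q(x,\xi)=\psi(\sigma(x)^T\xi)$, and then invoke a representation theorem (in the spirit of Ethier \& Kurtz \cite{ethier}, or the explicit SDE-martingale-problem correspondence for Lévy-driven equations) to construct, on a possibly enlarged probability space, a Lévy process $(L_t)_{t \geq 0}$ with exponent $\psi$ driving $X$. Concretely, one extends the test-function class from $C_c^\infty$ to complex exponentials $f(x)=e^{i\xi\cdot x}$ by a localization/truncation argument, reads off that $e^{i\xi\cdot X_t}-\int_0^t q(X_s,\xi)e^{i\xi\cdot X_s}\,ds$ is a martingale, and uses this to recover the driving noise as a semimartingale with the prescribed Lévy characteristics. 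The linearity of $x\mapsto\sigma(x)\,dL$ in the noise is what lets the recovered characteristics be written as $\sigma(x)$ applied to a single underlying Lévy process.

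The main obstacle I expect is the converse direction, specifically the reconstruction of the driving Lévy process from the martingale problem. Itô's formula gives (i) $\Rightarrow$ (ii) fairly mechanically once the symbol identity is checked, but going back requires either a general existence theorem for solutions of Lévy-driven SDEs given the correct generator, or a careful martingale-representation argument to disentangle $\sigma(X_{t-})$ from $dL_t$ when $\sigma$ may be degenerate. Two technical points deserve care: first, justifying the extension from $C_c^\infty$ to exponentials needs the local boundedness of $\sigma$ (hence of $q$) and the standing assumption that $A$ maps $C_c^\infty$ into $C_b$, which guarantees the relevant integrals are finite and the martingales are true martingales rather than merely local ones; second, handling the compensation of small jumps consistently with the truncation function $\I_{(0,1)}(|y|)$ appearing in \eqref{cndf}, so that the drift terms match on both sides. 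I would therefore structure the proof so that the symbol-identity computation is isolated as the crux, with the martingale-problem/SDE equivalence cited from the literature wherever a general such correspondence is available.
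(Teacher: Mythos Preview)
Your proposal is correct and matches the paper's approach: for (i)$\Rightarrow$(ii) the paper applies It\^o's formula with a localization by exit times $\tau_r$ and then passes to the limit using $Af\in C_b(\mbb{R}^d)$ and dominated convergence, which is exactly your ``local martingale upgraded to true martingale by boundedness'' argument made explicit. For (ii)$\Rightarrow$(i) the paper does not carry out the reconstruction of the driving L\'evy process at all but simply cites Kurtz \cite{kurtz}; your sketch of how such a reconstruction would go is reasonable, but the paper treats this direction as a black-box reference, so your concern about it being the ``main obstacle'' is resolved by citation rather than by argument.
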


\begin{proof}
	Let $(X_t)_{t \geq 0}$ be a solution to the SDE \eqref{sde-eq10}. For $r>0$ denote by $\tau_r := \inf\{t \geq 0; |X_t| \geq r\}$ the exit time from the ball $B(0,r)$. Since $(X_t)_{t \geq 0}$ is non-explosive, we have $\tau_r \to \infty$ as $r \to \infty$ almost surely. It follows easily from It\^o's formula that \begin{equation*}
		M_t^{f,r} := f(X_{t \wedge \tau_{r}})-f(X_0) - \int_{[0,t \wedge \tau_r)} Af(X_s) \, ds
	\end{equation*}
	is a $\mbb{P}^{\mu}$-martingale for any $f \in C_c^{\infty}(\mbb{R}^d)$. As $Af \in C_b(\mbb{R}^d)$ and $f \in C_c^{\infty}(\mbb{R}^d) \subseteq C_b(\mbb{R}^d)$, the dominated convergence theorem gives \begin{equation*}
		M_t^{f,r} \xrightarrow[]{r \to \infty} M_t^f := f(X_t)-f(X_0)- \int_0^t Af(X_s) \, ds \quad \text{in $L^1(\mbb{P}^{\mu})$}.
	\end{equation*}
	This implies that $(M_t^f)_{t \geq 0}$ is a martingale for each $f \in C_c^{\infty}(\mbb{R}^d)$. For the proof of the implication (ii) $\implies$ (i) we refer to Kurtz \cite{kurtz}.
\end{proof}

The next result allows us to remove the large jumps from the driving L\'evy process $(L_t)_{t \geq 0}$.

\begin{thm} \label{sde-9}
	Let $(L_t)_{t \geq 0}$ be a $k$-dimensional L\'evy process with L\'evy triplet $(b_L,Q_L,\nu_L)$ and characteristic exponent $\psi$. Let $\sigma: \mbb{R}^d \to \mbb{R}^{d \times k}$ be a continuous function satisfying the linear growth condition $|\sigma(x)| \leq c(1+|x)$. \begin{enumerate}
		\item For any initial distribution $\mu$ the SDE \begin{equation}
			dY_t = \sigma(Y_{t-}) \, dL_t, \qquad Y_0 \sim \mu, \label{sde1}
		\end{equation}
		has a weak solution (which does not explode in finite time).
		\item For fixed $r>0$ denote by $(L_t^{(r)})_{t \geq 0}$ the L\'evy process which is obtained by removing all jumps of modulus larger than $r$ from $(L_t)_{t \geq 0}$, i.\,e.\ a L\'evy process with L\'evy triplet $(b_L,Q_L,\nu_L|_{\overline{B(0,r)}})$. If \eqref{sde1} has a unique weak solution for any initial distribution, then the SDE  \begin{equation}
		dX_t = \sigma(X_{t-}) \, dL_t^{(r)}, \qquad X_0 \sim \mu, \label{sde-trun-3}
	\end{equation}
	has a unique weak solution for any initial distribution $\mu$. 
\end{enumerate}
\end{thm}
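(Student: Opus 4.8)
My plan is to reduce both assertions to martingale problems via Lemma~\ref{sde-6} and to treat the two parts separately. Write
\[
Af(x) = \nabla f(x)\cdot\sigma(x)b_L + \tfrac12\tr\big(\sigma(x)Q_L\sigma(x)^T\hess f(x)\big) + \int_{\mbb{R}^k}\big(f(x+\sigma(x)y)-f(x)-\sigma(x)y\cdot\nabla f(x)\I_{(0,1)}(|y|)\big)\,\nu_L(dy)
\]
for the operator with symbol $-\psi(\sigma(x)^T\xi)$, and $A^{(r)}$ for the analogous operator with $\nu_L$ replaced by $\nu_L|_{\overline{B(0,r)}}$. A first routine check, using only continuity and linear growth of $\sigma$, is that $A$ and $A^{(r)}$ map $C_c^\infty(\mbb{R}^d)$ into $C_b(\mbb{R}^d)$: away from a neighbourhood of $\supp f$ the local terms vanish, and the jump term is bounded by $\|f\|_\infty\,\nu_L(\{y:\sigma(x)y\in \supp f-x\})$, a set on which $|y|$ is bounded away from $0$ once $|x|$ is large (so it has finite $\nu_L$-mass). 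Hence Lemma~\ref{sde-6} applies and I may pass freely between weak solutions of the SDEs and solutions of the corresponding martingale problems.

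For part (i) I would construct a solution by truncation. Put $\sigma_n:=\sigma\chi_n$ with $\chi_n\in C_c^\infty$ and $\I_{B(0,n)}\leq\chi_n\leq\I_{B(0,2n)}$; then $\sigma_n$ is bounded and continuous, so the SDE driven by $L$ with coefficient $\sigma_n$ has a weak solution $\mbb{P}_n$ (existence for bounded continuous coefficients, via the martingale problem for the bounded symbol $\psi(\sigma_n(\cdot)^T\xi)$; if convenient one mollifies to get $\sigma_n$ Lipschitz and invokes \cite{schnurr}). Since $|\sigma_n|\leq|\sigma|\leq c(1+|\cdot|)$, all estimates are uniform in $n$: this gives compact containment and, together with Aldous' criterion applied to $f(X_\cdot)$ for $f\in C_c^\infty$ (note $\sup_n\|A_nf\|_\infty<\infty$ by the tail bound above), tightness of $(\mbb{P}_n)$ on Skorokhod space; any weak limit solves the $(A,C_c^\infty)$-martingale problem because $A_nf\to Af$ uniformly for each fixed $f$. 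The crucial point, non-explosion, I would get from the Lyapunov function $V(x):=\log(1+|x|^2)$: since $|\nabla V(x)|\leq C(1+|x|)^{-1}$ and $\|\hess V(x)\|\leq C(1+|x|^2)^{-1}$, the linear growth of $\sigma$ makes the drift, Gaussian, and small-jump ($|y|\leq 1$) contributions to $AV$ bounded by an absolute constant uniformly in $x$, while the finitely many jumps of $L$ with $|y|>1$ move $X$ by the finite amount $\sigma(X_{t-})\Delta L_t$; a Dynkin/Gronwall argument between successive large jumps then rules out explosion in finite time.

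For part (ii) the idea is that $L$ and $L^{(r)}$ differ only by the large jumps, which form a compound Poisson process of finite intensity $\lambda:=\nu_L(\{|y|>r\})<\infty$. At the level of generators this reads $A=A^{(r)}+B$ with $Bf(x)=\int_{|y|>r}\big(f(x+\sigma(x)y)-f(x)\big)\,\nu_L(dy)=\lambda\big(\mc{K}f(x)-f(x)\big)$, where $\mc{K}f(x):=\lambda^{-1}\int_{|y|>r}f(x+\sigma(x)y)\,\nu_L(dy)$ is a Markov kernel; in particular $B$ is a bounded operator on $C_b(\mbb{R}^d)$. Existence of a weak solution of \eqref{sde-trun-3} is granted by part (i) applied to the Lévy process $L^{(r)}$, so only uniqueness must be shown. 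Here I would invoke the perturbation theory for martingale problems: the $(A,C_c^\infty)$-problem is well posed by hypothesis (via Lemma~\ref{sde-6}), and writing $A^{(r)}=A+(-B)$ with $B$ bounded, the first-jump (Dyson--Phillips/Volterra) decomposition shows that the transition functions of any solution of the $A^{(r)}$-problem are uniquely determined by those of the $A$-problem. Concretely, interlacing a solution of the $A^{(r)}$-problem with an independent rate-$\lambda$ stream of $\mc{K}$-distributed jumps yields a solution of the $A$-problem, and conditioning on the event that no such jump has occurred before time $t$ (probability $e^{-\lambda t}$) recovers the finite-dimensional distributions of the $A^{(r)}$-solution from the unique law of the $A$-solution. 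This forces uniqueness, and Lemma~\ref{sde-6} translates it back into weak uniqueness for \eqref{sde-trun-3}.

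I expect the genuine obstacle to be exactly this uniqueness in part (ii). The interlacing that turns an $A^{(r)}$-solution into an $A$-solution requires restarting after each large jump, i.e.\ a measurable family of solutions rather than a single path, and the inversion must be organized so that the bounded perturbation transfers well-posedness in the direction $A\Rightarrow A^{(r)}$ and not merely $A^{(r)}\Rightarrow A$. This is where the boundedness of $B$ is essential, and hence where it matters that we remove genuinely large jumps: for $r\geq 1$ no compensator enters $B$ and it is bounded, whereas for $r<1$ one must first absorb the compensating drift $\int_{r<|y|<1}\sigma(x)y\,\nu_L(dy)$ into the drift of $L^{(r)}$ so that the perturbation is again of pure compound-Poisson type. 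The required transfer of uniqueness under a bounded perturbation is the content of the perturbation results for martingale problems in Ethier \& Kurtz \cite[Chapter 4]{ethier}.
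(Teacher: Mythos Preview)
Your overall strategy is correct and, for part~(ii), coincides with the paper's: build an $A$-solution from a given $A^{(r)}$-solution by interlacing large jumps, invoke uniqueness for $A$, and condition on the first jump time to pin down the law of the $A^{(r)}$-solution on $\{t<N_1\}$; then iterate. The paper carries out exactly this programme, using \cite[Lemma 4.5.16]{ethier} to identify the joint law of $(\tilde X_{t\wedge N_1},N_1)$ with that of $(Y_{t\wedge\tau_1},\tau_1)$ for a genuine $A$-solution $Y$, and then shows that the conditional finite-dimensional distributions of $Y_{\cdot\wedge\tau_1}$ given $\tau_1$ are determined by the (unique) unconditional law of $Y$. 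Your sentence ``conditioning on the event that no such jump has occurred before time $t$ recovers the finite-dimensional distributions'' is the right idea, but note that what is determined by the hypothesis is only the \emph{marginal} law of the $A$-solution, not its joint law with the auxiliary jump clock, so this step needs exactly the care the paper supplies. Also, your framing ``$A^{(r)}=A+(-B)$ and apply the perturbation results in \cite[Chapter~4]{ethier}'' does not work as stated: those results (e.g.\ Proposition~4.10.2) transfer well-posedness from a base generator to the base \emph{plus} a compound-Poisson generator, and $-B=\lambda(I-\mc{K})$ is not of that form. The actual mechanism is the explicit interlacing/conditioning you describe afterwards, not an abstract bounded-perturbation theorem.

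For part~(i) your route (truncate $\sigma$, Aldous tightness, pass to the limit, Lyapunov function $V=\log(1+|x|^2)$ for non-explosion) is valid but longer than necessary. The paper simply invokes \cite[Theorem 4.5.4]{ethier} to obtain a (possibly explosive) martingale-problem solution directly, and then cites a conservativeness criterion under linear growth to rule out explosion; Lemma~\ref{sde-6} converts this into a weak SDE solution. Your approach has the merit of being self-contained, but you should check the uniform bound $\sup_n\|A_nf\|_\infty<\infty$ carefully in the transition region $n\leq|x|\leq 2n$, and your claim ``$A_nf\to Af$ uniformly'' is stronger than what you actually need and stronger than what is obviously true (uniform convergence on compacts suffices for the limit identification, together with the uniform bound for dominated convergence).
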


\begin{proof}
	Denote by $A$ the pseudo-differential operator with symbol $-\psi(\sigma(x)^T \xi)$, and fix an initial distribution $\mu$. By \cite[Theorem 4.5.4]{ethier} there exists a process $(Y_t)_{t \geq 0}$ which takes values in the one-point compactification of $\mbb{R}^d$ and which is a solution to the  $(A,C_c^{\infty}(\mbb{R}^d))$-martingale problem. It follows from the linear growth condition on $\sigma$ and \cite[Corollary 3.2]{change} that $(Y_t)_{t \geq 0}$ is conservative, i.\,e.\ it does not explode in finite time. Applying Lemma~\ref{sde-6} shows that $(Y_t)_{t \geq 0}$ is a weak solution to \eqref{sde1}, and this proves (i). \par
	It remains to prove (ii). Let $(X_t)_{t \geq 0}$ be a solution to \eqref{sde-trun-3}, and let $(N_j)_{j \geq 1}$ be sequence of independent random variables such that $(N_j)_{j \geq 1}$ and $(X_t)_{t \geq 0}$ are independent and $N_j \sim \Exp(\lambda)$ is exponentially distributed with intensity $\lambda:= \nu_L(\{y; |y|>r\})$ for $j \geq 1$. If we define \begin{equation*}
		\tilde{L}_t^{(r)} := L_t^{(r)} + C_1 \I_{\{t \geq N_1\}} \qquad \qquad \tilde{X}_t := X_t + C_1 X_{N_1-} \I_{\{t \geq N_1\}}
	\end{equation*}
	for a random variable $C_1 \sim \lambda^{-1} \nu_L|_{\overline{B(0,r)}^c}$ which is independent from $(X_t)_{t \geq 0}$ and $(N_j)_{j \geq 1}$, then $(\tilde{X}_{t \wedge N_1})$ is a solution to the SDE \begin{equation*}
		Z_{t \wedge N_1}-Z_0 = \int_0^{t \wedge N_1} \sigma(Z_{t-}) \, d\tilde{L}_t^{(r)}, \qquad Z_0 \sim \mu.
	\end{equation*}
	It follows from It\^o's formula and \cite[Lemma 4.5.16]{ethier} that there exist a solution $(Y_t)_{t \geq 0}$ to the $(A,C_c^{\infty}(\mbb{R}^d))$-martingale problem and a random variable $\tau_1$ taking values in $[0,\infty)$ such that $(\tilde{X}_{t \wedge N_1},N_1)_{t \geq 0}$ and $(Y_{t \wedge \tau_1},\tau_1)_{t \geq 0}$ have the same distribution. By Lemma~\ref{sde-6} $(Y_t)_{t \geq 0}$ is a weak solution to \eqref{sde1}. Set \begin{equation*} 
		\varrho_j(\omega) := k 2^{-j}, \qquad \text{for} \, \, \omega \in \{(k-1)2^{-j} \leq \tau_1 < k 2^{-j}\}, k \geq 1,
	\end{equation*}
	then $\varrho_j \downarrow \tau_1$, and it follows from the dominated convergence theorem that \begin{align*} 
		\mbb{E}^{\mu} (f(Y_{t_1 \wedge \tau_1},\ldots,Y_{t_n \wedge \tau_1}) \mid \tau_1)  
		&=\lim_{j \to \infty} \sum_{k=1}^{\infty} \I_{[(k-1)2^{-j}, k2^{-j})}(\tau_1) \mbb{E}^{\mu}(f(Y_{t_1 \wedge k2^{-j}},\ldots,Y_{t_n \wedge k2^{-j}})
	\end{align*}
	for any bounded continuous function $f: \mbb{R}^n \to \mbb{R}$ and any $0<t_1 < \ldots<t_n$. This shows that the finite-dimensional distributions of $(Y_{t \wedge \tau_1})_{t \geq 0}$ conditioned on $\tau_1$ are uniquely determined by the finite-dimensional distributions of $(Y_t)_{t \geq 0}$. Consequently, \begin{align*}
		\mbb{P}^{\mu}(X_{t_1} \in B_1,\ldots,X_{t_n} \in B_n, t_n<N_1)
		&= \mbb{P}^{\mu}(\tilde{X}_{t_1} \in B_1,\ldots,\tilde{X}_{t_n} \in B_n, t_n<N_1) \\
		&= \mbb{E}^{\mu} \bigg( \mbb{P}^{\mu}(Y_{t_1 \wedge \tau_1} \in B_1,\ldots,Y_{t_n \wedge \tau_1} \in B_n \mid \tau_1) \I_{\{\tau_1<t_n\}} \bigg)
	\end{align*}
	is uniquely determined by the finite-dimensional distributions of $(Y_t)_{t \geq 0}$ and the distribution of $\tau_1 \sim \Exp(\lambda)$ for any Borel sets $B_i$ and $0<t_1<\ldots<t_n$, $n \in \mbb{N}$. We iterate the procedure. Since the shifted process $Z_t := X_{t+N_1+\ldots+N_{j-1}}$ is a weak solution to the SDE \begin{equation*}
		dZ_t = \sigma(Z_{t-}) \, dL_t^{(r)}, \qquad Z_0 \sim \mu_{j}
	\end{equation*}
	we can construct a weak solution $(Y_t)_{t \geq 0}$ to \eqref{sde1}, $Y_0 \sim \mu_j$, and a random variable $\tau_j$ such that $(Z_{t \wedge N_j},N_j)_{t \geq 0} = (Y_{t \wedge \tau_j},\tau_j)_{t \geq 0}$ in distribution, and now we can use the same reasoning as in the first part of the proof. (Note that $\mu_j$ is uniquely determined by $X_t$, $t<N_1+\ldots+N_{j-1}$, since $X_{N_1+\ldots+N_{j-1}} = X_{(N_1+\ldots+N_{j-1})-}$ almost surely.) We conclude that
	 \begin{align*}
			\mbb{P}^{\mu}(X_{t_1} \in B_1,\ldots,X_{t_n} \in B_n, t_n<N_1+\ldots+N_j)
	\end{align*}
	is uniquely determined by finite-dimensional distributions of the unique weak solutions to \eqref{sde2} started at $Y_0 \sim \mu_i$, $i \leq j$. As $N_1+\ldots+N_j \to \infty$ as $j \to \infty$, this proves the weak uniqueness.
\end{proof}

The next result is the key step to prove Theorem~\ref{sde-0}.

\begin{thm} \label{sde-13}
	Let $(L_t)_{t \geq 0}$ and $\sigma$ be as in Theorem~\ref{sde-0}. If \eqref{sde-co} holds, then the solution $(X_t)_{t \geq 0}$ to the SDE \eqref{sde} is a rich Feller process with symbol $q(x,\xi) = \psi(\sigma(x)^T \xi)$, $x,\xi \in \mbb{R}^d$.
	\end{thm}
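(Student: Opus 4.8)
The plan is to realize the weak solution $(X_t)_{t\ge0}$ as the Markov process associated with the pseudo-differential operator $A$ of symbol $-q$, $q(x,\xi)=\psi(\sigma(x)^T\xi)$, and then to verify the three defining properties of a rich Feller semigroup for $T_tf(x):=\mbb{E}^xf(X_t)$. First I would record the L\'evy triplet of $q(x,\cdot)$: since $\psi$ has triplet $(b_L,Q_L,\nu_L)$, the map $\xi\mapsto\psi(\sigma(x)^T\xi)$ is a continuous negative definite function whose Gaussian part is $\sigma(x)Q_L\sigma(x)^T$ and whose L\'evy measure $\nu(x,\cdot)$ is the pushforward of $\nu_L$ under $y\mapsto\sigma(x)y$. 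In particular $\nu(x,B(-x,r))=\nu_L(\{y;\sigma(x)y\in B(-x,r)\})$, so condition \eqref{sde-co} is literally condition \ref{sde-5-iii} of Lemma~\ref{sde-5}. The hypotheses of that lemma hold because $\sigma$ is continuous (hence $x\mapsto q(x,\xi)$ is continuous) and, using the linear growth bound together with $|\psi(\eta)|\le c_\psi(1+|\eta|^2)$, one gets $|q(x,\xi)|\le c_\psi(1+|\sigma(x)|^2|\xi|^2)$, which is locally bounded in the required sense. Lemma~\ref{sde-5} then gives that $A$ maps $C_c^{\infty}(\mbb{R}^d)$ into $C_{\infty}(\mbb{R}^d)$.

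Since $A$ maps $C_c^{\infty}(\mbb{R}^d)$ into $C_{\infty}(\mbb{R}^d)\subseteq C_b(\mbb{R}^d)$, Lemma~\ref{sde-6} applies and identifies weak solutions of \eqref{sde} with solutions of the $(A,C_c^{\infty}(\mbb{R}^d))$-martingale problem. Combined with the assumed weak uniqueness (and with Theorem~\ref{sde-9}(i) for existence), this makes the martingale problem well-posed for every initial distribution, so its solutions form a strong Markov family $(\mbb{P}^x)_{x\in\mbb{R}^d}$; conservativeness (non-explosion) follows from the linear growth of $\sigma$ exactly as in Theorem~\ref{sde-9}(i). From well-posedness I would deduce the two easy halves of the Feller property: continuity of $x\mapsto T_tf(x)$ for $f\in C_{\infty}(\mbb{R}^d)$ (continuous dependence of the solution on its starting point), and $T_tf(x)\to f(x)$ pointwise as $t\to0$ by right-continuity of the paths and dominated convergence.

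The main obstacle is the remaining half, $T_tf\in C_{\infty}(\mbb{R}^d)$, i.e. $\mbb{E}^xf(X_t)\to0$ as $|x|\to\infty$ for $f\in C_c^{\infty}(\mbb{R}^d)$, equivalently $\mbb{P}^x(X_t\in K)\to0$ for every compact $K$. The difficulty is genuine: because $\sigma$ obeys only a linear growth bound, a single jump $\sigma(X_{s-})\Delta L_s$ can have modulus comparable to $|X_{s-}|$, so the maximal inequality for the symbol, which controls $\mbb{P}^x(\sup_{s\le t}|X_s-x|\ge R)$ by $c\,t\sup_{|y-x|\le R,\,|\xi|\le 1/R}|q(y,\xi)|$, does not tend to $0$ when $R\sim|x|$ and therefore cannot by itself keep the process from travelling a distance of order $|x|$ back towards the origin. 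The role of \eqref{sde-co} is precisely that, although moving far is cheap, a jump landing the process inside the fixed set $K$ forces $\sigma(X_{s-})\Delta L_s\in B(-X_{s-},\varrho)$, and the rate of such returning jumps from a position $y$ equals $\nu_L(\{z;\sigma(y)z\in B(-y,\varrho)\})$, which vanishes as $|y|\to\infty$. To exploit this I would use Theorem~\ref{sde-9}(ii): remove from $(L_t)_{t\ge0}$ all jumps of modulus $>\rho$, control the process driven by $L^{(\rho)}$ (whose returning jumps have arbitrarily small total rate once $|x|$ is large, by \eqref{sde-co}) via the maximal inequality, and then re-insert the large jumps through the interlacing construction of Theorem~\ref{sde-9}, estimating at each interlacing epoch the probability that a big jump sends the process into a neighbourhood of $K$, again controlled by \eqref{sde-co}. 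Making these estimates uniform in $|x|\gg1$ is the technical heart of the argument.

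Once $T_t(C_{\infty}(\mbb{R}^d))\subseteq C_{\infty}(\mbb{R}^d)$ is established, combining it with the pointwise convergence $T_tf\to f$ and the contraction and positivity of $T_t$ upgrades to strong continuity $\|T_tf-f\|_{\infty}\to0$, so $(T_t)_{t\ge0}$ is a Feller semigroup and $(X_t)_{t\ge0}$ a Feller process with the stated symbol. Finally, to see that $C_c^2(\mbb{R}^d)$ lies in the domain of the generator $L$ with $Lf=Af$, I would apply It\^o's formula to $f\in C_c^2(\mbb{R}^d)$ to identify Dynkin's characteristic operator as $\mathfrak{L}f=Af$, observe that $Af\in C_{\infty}(\mbb{R}^d)$ by the same computation as in the first step, and invoke Theorem~\ref{sde-7} to conclude $f\in\mc{D}(L)$ and $Lf=Af$. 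In particular $C_c^{\infty}(\mbb{R}^d)\subseteq\mc{D}(L)$, so the process is rich, and its symbol is $q(x,\xi)=\psi(\sigma(x)^T\xi)$.
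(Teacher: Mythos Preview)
Your overall architecture matches the paper's: reduce via Theorem~\ref{sde-9} to the SDE driven by a truncated L\'evy process $L^{(r)}$, prove that its solution $(Y_t)_{t\ge0}$ is a rich Feller process, and then add the large jumps back. The identification of the triplet of $q$, the use of Lemma~\ref{sde-5} and Lemma~\ref{sde-6}, the well-posedness argument, and the final step through Dynkin's characteristic operator and Theorem~\ref{sde-7} are all essentially as in the paper (the paper carries out that last step for $(Y_t)_{t\ge0}$ rather than $(X_t)_{t\ge0}$, but the mechanism is identical).

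The gap is at exactly the point you flag as ``the technical heart'': proving $\mbb{P}^x(|Y_t|\le R)\to0$ as $|x|\to\infty$ for the truncated process. Your proposed tool, the maximal inequality for the symbol combined with \eqref{sde-co}, does not obviously close here, and the paper does something different. It chooses the truncation level $r$ so small that $|\sigma(x)y|\le\tfrac12(1+|x|)$ for every $|y|\le r$ (this needs only the linear growth of $\sigma$, not \eqref{sde-co}), and then runs a Lyapunov argument with $f(x)=(1+x^2)^{-1}$: the choice of $r$ guarantees that the generator $B$ of the truncated process satisfies $|Bf|\le Cf$, so It\^o's formula and Gronwall give $\mbb{E}^xf(Y_t)\le f(x)e^{Ct}$, and Markov's inequality yields the $C_\infty$-property. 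Note that \eqref{sde-co} plays no role whatsoever in this step; it enters only when one verifies $Bu\in C_\infty(\mbb{R}^d)$ for $u\in C_c^\infty(\mbb{R}^d)$ via Lemma~\ref{sde-5} and when one treats the large-jump operator.

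For re-inserting the large jumps the paper also proceeds differently: rather than pathwise interlacing estimates, it observes that
\[
Nu(x):=\int_{|y|>r}\bigl(u(x+\sigma(x)y)-u(x)\bigr)\,\nu_L(dy)
\]
is a bounded dissipative operator on $C_\infty(\mbb{R}^d)$ (here \eqref{sde-co} is precisely what forces $Nu$ to vanish at infinity), so the bounded-perturbation theorem produces a Feller semigroup with generator $M+N$, and weak uniqueness of \eqref{sde} identifies the associated process with $(X_t)_{t\ge0}$. This is shorter than the interlacing route and avoids the uniformity issues you anticipate.
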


Note that $(X_t)_{t \geq 0}$ may fail to be a Feller process if \eqref{sde-co} is not satisfied; consider for instance the SDE \begin{equation*}
	dX_t = -X_{t-} \, dN_t, \qquad X_0 = x
\end{equation*}
for a Poisson process $(N_t)_{t \geq 0}$.

\begin{proof}	
	To keep notation simple, we consider only the case $k=d=1$. Since $\sigma$ is at most of linear growth, we can choose $r \in (0,1)$ such that \begin{equation}
		|\sigma(x) y| \leq \frac{1}{2} (|x|+1) \fa x \in \mbb{R}, |y| \leq r.  \label{sde-ast1}
	\end{equation}
	Denote by $(L_t^{(r)})_{t \geq 0}$ the L\'evy process which is obtained from $(L_t)_{t \geq 0}$ by removing all  jumps of modulus larger than $r$. By Theorem~\ref{sde-9}, there exists a unique weak solution to the SDE \begin{equation} 
		dY_t = \sigma(Y_{t-}) \, dL_t^{(r)}, \qquad Y_0 \sim \mu \label{sde-trun-2}
	\end{equation}
	for any initial distribution $\mu$. We split the proof into several steps. \par
	\textbf{Step 1: $(Y_t)_{t \geq 0}$ is a Feller process.} It is well-known that the unique weak solution $(Y_t)_{t \geq 0}$ is a Markov process (the proof works exactly as in the diffusion case, see e.\,g.\ \cite[Section 19.7]{bm2}), see also \cite[Theorem 4.4.2]{ethier}. By  \cite[Lemma 1.4]{ltp}, it suffices to prove that $P_t f(x) := \mbb{E}^x f(Y_t)$ satisfies the following three properties. \begin{enumerate}
		\item $|P_t f(x)-f(x)| \xrightarrow[]{t \to 0} 0$ for all $x \in \mbb{R}$ and $f \in C_{\infty}(\mbb{R})$
		\item $|P_t f(x)| \xrightarrow[]{|x| \to \infty} 0$ for all $t \geq 0$, $f \in C_{\infty}(\mbb{R})$
		\item $x \mapsto P_tf(x)$ is continuous for all $t \geq 0$, $f \in C_{\infty}(\mbb{R})$
	\end{enumerate}
	Since  $(Y_t)_{t \geq 0}$ has \cadlag sample paths, it follows from the dominated convergence theorem that (i) holds. To prove (ii), we are going to show that \begin{equation}
		\lim_{|x| \to \infty} \mbb{P}^x(|Y_t| \leq R) = 0 \fa R>0. \label{cinfty}
	\end{equation}
	If we define a function $f$ by $f(x) := (x^2+1)^{-1}$, then \begin{equation*}
			|f'(x)| \leq 2 |x| f(x)^2 \qquad \text{and} \qquad |f''(x)| \leq 6 f(x)^2, \qquad x \in \mbb{R};
		\end{equation*}
	combining this with \eqref{sde-ast1} and the fact that $\sigma$ grows at most linearly, we find that there exists an absolute constant $C>0$ such that
	\begin{align} \label{gen} \begin{aligned}
		Bf(x) &:= \left(\int_{r<|y|<1} y \, \nu(dy) + b_L \right) \sigma(x) f'(x) + \frac{1}{2} \sigma(x)^2 Q_L f''(x) \\
		&\quad + \int_{0<|y| \leq r} (f(x+\sigma(x)y)-f(x)-f'(x) \sigma(x)y \I_{|y| \leq 1}) \, \nu_L(dy) \end{aligned}
	\end{align}
	satisfies $|Bf(x)| \leq C f(x)$, $x \in \mbb{R}$, for some absolute constant $C>0$. 
	For fixed $R>0$ set $\varrho_R := \inf\{t \geq 0; |Y_t| \leq R\}$ and $Y_t^R := Y_{t \wedge \varrho_R}$. An application of It\^o's formula gives \begin{equation*}
		\mbb{E}^x f(Y_{t \wedge \tau_{\varrho}^x}^R) - f(x) = \mbb{E}^x \left( \int_{[0,t \wedge \tau_{\varrho}^x)} Bf(Y_s^R) \, ds \right);
	\end{equation*}
	here $\tau_{\varrho}^x := \inf\{t \geq 0; |Y_t-x| \geq \varrho	\}$ denotes the exit time from the ball $B(x,\varrho)$. Consequently, \begin{equation*}
		\mbb{E}^x f(Y_{t \wedge \tau_{\varrho}^x}^R) \leq f(x) + C \int_0^t \mbb{E}^x(f(Y_{s \wedge \tau_{\varrho}^x}^R)) \, ds.
	\end{equation*}
	Applying Gronwall's lemma we get \begin{equation*}
		\mbb{E}^x f(Y_{t \wedge \tau_{\varrho}^x}^R) \leq f(x) e^{Ct} \fa t \geq 0, x \in \mbb{R}.
	\end{equation*}
	Since the constant $C$ does not depend on $\varrho$, we obtain from Fatou's lemma \begin{equation*}
		\mbb{E}^x f(Y_t^R) \leq f(x) e^{Ct} \fa t \geq 0, x  \in \mbb{R}.
	\end{equation*}
	This implies by the Markov inequality \begin{align*}
		\mbb{P}^x(|Y_t| \leq R)
		\leq \mbb{P}^x \big(f(Y_t^R) \geq f(R) \big)
		&\leq \frac{1}{f(R)}\mbb{E}^x f(Y_t^R) \\
		&\leq \frac{1}{f(R)} f(x) e^{Ct}.
	\end{align*}
	Obviously, the right-hand side converges to $0$ as $|x| \to \infty$, and this gives \eqref{cinfty}. It remains to prove (iii), i.\,e.\ that $x \mapsto P_t f(x)$ is continuous. Set $g(x) := x^2+1$. Using a very similar reasoning as above, we find $|Bg(x)| \leq C g(x)$ for some absolute constant $C>0$ which implies by It\^o's formula, the optional stopping theorem and Gronwall's lemma that \begin{equation*}
		\mbb{E}^x g(Y_{T \wedge \tau}) \leq g(x) e^{C T} \fa T \geq 0, x \in \mbb{R}
	\end{equation*}
	where $\tau := \inf\{t \geq 0; |Y_t| \geq R\}$. Hence, by the Markov inequality, \begin{align*}
		\mbb{P}^x \left( \sup_{t \leq T} |Y_t| \geq R \right)
		\leq \mbb{P}^x \big(g(Y_{T \wedge \tau}) \geq g(R) \big)
		\leq \frac{1}{g(R)} \mbb{E}^x g(Y_{T \wedge \tau}) 
		&\leq \frac{g(x)}{g(R)} e^{cT}
	\end{align*}
	and the right-hand side converges uniformly (in $x$) on compact sets to $0$ as $R \to \infty$. On the other hand, it follows from the existence of a unique weak solution to \eqref{sde-trun-2}, Lemma~\ref{sde-7} and \cite[Corollary 2.5]{kurtz} that the $(B,C_c^{\infty}(\mbb{R}))$-martingale problem for the operator $B$ defined in \eqref{gen} is well-posed. Now we can apply \cite[Theorem 4.1.16]{jac3} to get tightness and then use exactly the same reasoning as in \cite[Corollary 4.6.4]{jac3} to conclude that $x \mapsto \mbb{E}^x f(Y_t)$ is continuous. 
	Combining the above considerations, we conclude that $(Y_t)_{t \geq 0}$ is a Feller process. From now on we denote by $(M,\mc{D}(M))$ its generator. \par
	\textbf{Step 2: $C_c^{\infty}(\mbb{R}) \subseteq \mc{D}(M)$ and $Mu = Bu$ for any $u \in C_c^{\infty}(\mbb{R})$ with $B$ defined in \eqref{gen}.} 
	Fix $u \in C_c^{\infty}(\mbb{R})$ and denote as usual by $\tau_{\varrho}^x := \inf\{t>0; |Y_t-x| \geq \varrho$\} the exit time from the ball $B(x,\varrho)$. Recall that by It\^o's formula \begin{equation}
	 		\mbb{E}^x u(Y_{t \wedge \tau_{\varrho}^x})-u(x) = \mbb{E}^x \left( \int_{(0,t \wedge \tau_{\varrho}^x)} Bu(Y_s) \, ds \right) \label{sde-eq11}
	 	\end{equation}
	 	for all $x \in \mbb{R}$. Fix a non-absorbing point $x \in \mbb{R}$. By Lemma~\ref{sde-5} and \eqref{sde-co}, we have $Bu \in C_{\infty}(\mbb{R})$. Since $\mbb{E}^x \tau_{\varrho}^x<\infty$ for $\varrho>0$ sufficiently small, it follows from the dominated convergence theorem that \begin{equation*}
	 		\mbb{E}^x u(Y_{\tau_{\varrho}^x})-u(x) = \mbb{E}^x \left( \int_{[0,\tau_{\varrho}^x)} Bu(Y_s) \, ds \right).
	 	\end{equation*}
	 	As $Bf$ is continuous, we get \begin{align*}
	 		\left| \frac{\mbb{E}^x u(Y_{\tau_{\varrho}^x})-u(x)}{\mbb{E}^x \tau_{\varrho}^x} -Bu(x) \right|
	 		&\leq \frac{1}{\mbb{E}^x \tau_{\varrho}^x} \mbb{E}^x \left( \int_{[0,\tau_{\varrho}^x)} |Bu(Y_s)-Bu(x)| \, ds \right) \\
	 		&\leq \sup_{|y-x| \leq \varrho} |Bu(y)-Bu(x)|
	 		\xrightarrow[]{\varrho \to 0} 0.
	 	\end{align*}
	 	If $x \in \mbb{R}$ is absorbing, then \eqref{sde-eq11} gives $Bu(x)=0$. This shows that $u$ is the domain of Dynkin's characteristic operator $\mathfrak{M}$ and $\mathfrak{M}u =Bu$. As $Bu \in C_{\infty}(\mbb{R})$ this implies $u \in \mc{D}(M)$ and $Mu = Bu$, cf.\ Theorem~\ref{sde-7}. \par
	 	\textbf{Step 3: $(X_t)_{t \geq 0}$ is a rich Feller process with symbol $q$.}  We use a perturbation theorem to prove the assertion. Define an operator $N$ by \begin{equation*}
		Nu(x) := \int_{|y|>r} (u(x+\sigma(x)y)-u(x)) \, \nu_L(dy), \qquad x \in \mbb{R}, u \in C_{\infty}(\mbb{R}).
	\end{equation*}
	We claim that $N$ is a bounded linear operator which is dissipative and maps $C_{\infty}(\mbb{R})$ into $C_{\infty}(\mbb{R})$. \emph{Indeed:} Since $\nu_L(B(0,r)^c)<\infty$, the boundedness of $N$ is obvious. Moreover, it is clear from the definition that $N$ is dissipative. 
	Fix $u \in C_{\infty}(\mbb{R})$. Then \begin{equation*}
		|Nu(x)| \leq  \left(|u(x)|+\sup_{|y| \geq R} |u(y)| \right) \int_{|y|>r} \nu_L(dy) + \|u\|_{\infty} \nu_L(\{y \in \mbb{R}; |x+\sigma(x)y| \leq R, |y| >r\}) 
	\end{equation*}
	for any $R>0$. Letting first $|x| \to \infty$ and then $R \to \infty$, it follows from \eqref{sde-co} that $\lim_{|x| \to \infty} |Nu(x)|=0$. Since a straightforward application of the dominated convergence theorem gives the continuity of $Nu$, we conclude $Nu \in C_{\infty}(\mbb{R})$. \par
	Applying \cite[Corollary 1.7.2]{ethier}, we find that there exists a rich Feller process $(Z_t)_{t \geq 0}$ with generator $L:=M+N$. Note that the generator $L$ restricted to $C_c^{\infty}(\mbb{R})$ equals the pseudo-differential operator $A$ with symbol $-q(x,\xi) =- \psi(\sigma(x)^T \xi)$. Since $(Z_t)_{t \geq 0}$ is a solution to the $(A,C_c^{\infty}(\mbb{R}))$-martingale problem and therefore also a weak solution to \eqref{sde}, cf.\ \cite[Corollary 2.5]{kurtz}, it follows from the weak uniqueness of the solution that $X_t \stackrel{d}{=} Z_t$ for all $t \geq 0$ (for a given initial distribution). Hence, $(X_t)_{t \geq 0}$ and $(Z_t)_{t \geq 0}$ have the same semigroup, and this implies that $(X_t)_{t \geq 0}$ is a rich Feller process with symbol $q(x,\xi) = \psi(\sigma(x)^T \xi)$.
\end{proof}

Now we are ready to prove Theorem~\ref{sde-0}.

\begin{proof}[Proof of Theorem~\ref{sde-0}]
	Denote by $A$ the pseudo-differential operator with symbol $-q$. Note that the L\'evy-Khintchine formula shows that the family of L\'evy triplets associated with $q$ is given by \begin{align*}
		b(x) &= \sigma(x) b_L + \int_{|y|<1} \sigma(x) y (\I_{(0,1)}(|\sigma(x)y|)-\I_{(0,1)}(|y|)) \, \nu_L(dy) \\
		Q(x) &= \sigma(x) Q_L \sigma(x)^T  \\
		\nu(x,B) &= \int \I_B(\sigma(x) y) \, \nu_L(dy), \qquad B \in \mc{B}(\mbb{R}^d \backslash \{0\}).
	\end{align*}
	Since the domain of the generator of any rich Feller process contains $C_c^2(\mbb{R}^d)$, cf.\ \cite[Theorem 2.37c)]{ltp}, it suffices to show that the unique weak solution $(X_t)_{t \geq 0}$ is a rich Feller process if, and only if, \eqref{sde-co} holds, and to identify the symbol of the process. \par
 	It is clear from Theorem~\ref{sde-13} that \eqref{sde-co} is sufficient; it remains to prove the necessity of \eqref{sde-co}.	Suppose that the weak solution $(X_t)_{t \geq 0}$ is a rich Feller process. We claim that $q$ is the symbol of the Feller process $(X_t)_{t \geq 0}$, i.\,e.\ that the generator $L$ satisfies $Af = Lf$ for any $f \in C_c^{\infty}(\mbb{R}^d)$. Denote by $\tau_r^x := \inf\{t \geq 0; |X_t-x| \geq r\}$ the exit time from the ball $B(x,r)$ and fix a non-absorbing point $x \in \mbb{R}^d$. Since $x \mapsto q(x,\xi) = \psi(\sigma(x)^T \xi)$ is continuous for all $\xi \in \mbb{R}^d$ and $q$ is locally bounded, 
	it follows from the dominated convergence theorem that the pseudo-differential operator $A$ maps $C_c^{\infty}(\mbb{R}^d)$ into $C(\mbb{R}^d)$. Therefore an application of It\^o's formula and the fundamental theorem of calculus yields \begin{equation*}
		\lim_{r \to 0} \frac{1}{\mbb{E}^x \tau_r^x} (\mbb{E}^x f(X_{\tau_r^x})-f(x)) = Af(x) \fa f \in C_c^{\infty}(\mbb{R}^d).
	\end{equation*}
	On the other hand, $C_c^{\infty}(\mbb{R}^d)$ is contained in the domain of the generator $(L,\mc{D}(L))$ of the Feller process $(X_t)_{t \geq 0}$, and therefore by Dynkin's formula
 	\begin{equation*}
 		\lim_{r \to 0} \frac{1}{\mbb{E}^x \tau_r^x} (\mbb{E}^x f(X_{\tau_r^x})-f(x)) = Lf(x) \fa f \in C_c^{\infty}(\mbb{R}^d).
 	\end{equation*}
 	Hence, $Lf(x) = Af(x)$ for any non-absorbing point $x \in \mbb{R}^d$. If $x \in \mbb{R}^d$ is absorbing, then it is not difficult to see that $Af(x)=0=Lf(x)$. Hence, $Af=Lf$ for any $f \in C_c^{\infty}(\mbb{R}^d)$. This shows that $q$ is the symbol of the Feller process $(X_t)_{t \geq 0}$. By Lemma~\ref{sde-5}, $Af = Lf \in C_{\infty}(\mbb{R}^d)$ implies \eqref{sde-co}.   \par 
\end{proof}

\begin{proof}[Proof of Corollary~\ref{sde-1}]
	It is well known (see e.\,g.\ \cite[Theorem IV.9.1]{ikeda}) that there exists a (pathwise) unique strong solution to the SDE \eqref{sde} if $\sigma$ is Lipschitz continuous. Since any strong solution is also a weak solution and pathwise uniqueness implies uniqueness in law (see e.\,g.\ \cite[Corollary 140]{situ}), there exists a unique (in law) weak solution to \eqref{sde} for any $r \in (0,\infty]$. Now the claim follows from Theorem~\ref{sde-0}.
\end{proof}

\section{Examples}

In this section we present some illustrating examples. 

\begin{bsp}[Sublinear growth] \label{sde-10}
	Let $(L_t)_{t \geq 0}$ be a $k$-dimensional L\'evy process with L\'evy triplet $(b_L,Q_L,\nu_L)$ and characteristic exponent $\psi$ and let $\sigma: \mbb{R}^d \to \mbb{R}^{d \times k}$ be a Lipschitz continuous function. If $\sigma$ is of sublinear growth, i.\,e.\  \begin{equation*}
		\lim_{|x| \to \infty} \frac{|\sigma(x)|}{|x|} = 0,
	\end{equation*}
	then the strong solution to the SDE \begin{equation}
		dX_t = \sigma(X_{t-}) \, dL_t, \qquad X_0 = x, \label{sde2}
	\end{equation}
	is a rich $d$-dimensional Feller process with symbol $q(x,\xi) = \psi(\sigma(x)^T  \xi)$.
\end{bsp}

Let us mention that this statement can be also deduced from a result by B\"{o}ttcher \cite{boett11}.

\begin{proof}[Proof of Example~\ref{sde-10}]
	We show that any function $\sigma$ of sublinear growth satisfies \eqref{sde-co}. By the triangle inequality, we have \begin{align*}
		\nu_L(\{y \in \mbb{R}^k; \sigma(x) y \in B(-x,r)\})
		&= \nu_L \left( \left\{y \in \mbb{R}^k; \frac{|\sigma(x) y+x|}{|x|} < \frac{r}{|x|} \right\} \right) \\
		&\leq \nu_L \left( \left\{y \in \mbb{R}^k; 1- \frac{|\sigma(x)|}{|x|} |y| < \frac{r}{|x|} \right\} \right).
	\end{align*}
	For any fixed $r>0$ and $\eps>0$ we can choose $R>0$ sufficiently large such that $r/|x| \leq \eps$ and $|\sigma(x)|/|x| \leq \eps$ for all $|x| \geq R$. Hence, \begin{align*}
		\nu_L(\{y \in \mbb{R}^k; \sigma(x) y \in B(-x,r)\})
		&\leq \nu_L \left( \left\{y  \in \mbb{R}^k; 1- \eps |y| \leq \eps \right\} \right) \\
		&= \nu_L \left( \mbb{R}^k \backslash B \left(0, \eps^{-1}-1 \right) \right)
		\xrightarrow[]{\eps \to 0} 0. \qedhere
	\end{align*}
\end{proof}

If the driving L\'evy process $(L_t)_{t \geq 0}$ is a one-dimensional symmetric $\alpha$-stable process, we obtain the following stronger result.

\begin{bsp} \label{sde-11}
	Let $(L_t)_{t \geq 0}$ be a one-dimensional symmetric $\alpha$-stable L\'evy process for some $\alpha \in (0,2]$, i.\,e.\ a L\'evy process with characteristic exponent $\psi(\xi) = |\xi|^{\alpha}$, $\xi \in \mbb{R}$. Let $\sigma: \mbb{R} \to \mbb{R}$ be a continuous mapping such that the SDE \eqref{sde2} has a unique weak solution (taking values in $\mbb{R}^d$) for any initial distribution $\mu$. Then the unique weak solution $(X_t)_{t \geq 0}$ is a rich Feller process if, and only if, \begin{equation}
		\lim_{|x| \to \infty} \frac{|\sigma(x)|^{\alpha}}{|x|^{1+\alpha}} = 0. \label{alpha-grow}
	\end{equation}
	In this case, the symbol $q$ of the Feller process $(X_t)_{t \geq 0}$ is given by $q(x,\xi) = |\sigma(x)|^{\alpha} \, |\xi|^{\alpha}$.
\end{bsp}

Clearly, the growth condition~\eqref{alpha-grow} is, in particular, satisfied if $\sigma$ is at most of linear growth.

\begin{proof}[Proof of Example~\ref{sde-11}]
	Fix $r>0$. Since \begin{equation*}
		\int_{y \neq 0} \I_{B(-x,r)}(\sigma(x) y) \frac{1}{|y|^{1+\alpha}} \, dy = |\sigma(x)|^{\alpha} \int_{z \neq 0} \I_{B(-x,r)}(z) \frac{1}{|z|^{1+\alpha}} \, dz
	\end{equation*}
	for any $|x|>r$, it follows easily from the fact that \begin{equation*}
		\int_{z \neq 0} \I_{B(-x,r)}(z) \frac{1}{|z|^{1+\alpha}} \, dz \asymp |x|^{-\alpha-1}, \qquad |x| \gg 1
	\end{equation*}
	that \eqref{sde-co} is equivalent to \eqref{alpha-grow}. Now the assertion follows from Theorem~\ref{sde-0}.
\end{proof}

It is known that the SDE \eqref{sde2} driven by a one-dimensional symmetric $\alpha$-stable L\'evy process has a unique weak solution if $\sigma$ is continuous and one of the following two conditions is satisfied.	
\begin{enumerate}
		\item (cf.\ Zanzotto \cite{zan02}) $\alpha \in (1,2]$ and \begin{equation}
		\{x \in \mbb{R}; \sigma(x) = 0\} = \left\{x \in \mbb{R}; \forall \delta \in (0,1): \int_{x-\delta}^{x+\delta} |\sigma(y)|^{-\alpha} \, dy=\infty \right\} \label{iso}
	\end{equation}
		\item (cf.\ K\"uhn \cite{change}) $\alpha \in (0,2]$, $\inf_x \sigma(x)>0$.
	\end{enumerate}
Note that \eqref{iso} is satisfied for any continuous function $\sigma$ such that $\inf_{x \in \mbb{R}} \sigma(x)>0$, and therefore the first condition is for $\alpha \in (1,2]$ more general than the second one. \par
Example~\ref{sde-11} shows, in particular, that for a one-dimensional symmetric $\alpha$-stable L\'evy process $(L_t)_{t \geq 0}$, $\alpha \in (1,2]$,  the (weak) solution to the SDE \begin{equation*}
	dX_t = |X_{t-}|^{\beta} \, dL_t, \qquad X_0 = x,
\end{equation*}
is a rich Feller process with symbol $q(x,\xi) = |x|^{\beta \alpha} \, |\xi|^{\alpha}$ for all $\beta \in [1/\alpha,1)$. \par \medskip

The next example discusses generalized Ornstein--Uhlenbeck processes which have been studied by Behme \& Lindner \cite{behme}. 
		
\begin{bsp}[Generalized Ornstein--Uhlenbeck process] \label{sde-12}
	Let $L_t=(U_t,V_t)$ be a two-di\-men\-sion\-al L\'evy process with characteristic exponent $\psi$ and L\'evy triplet $(b_L,Q_L,\nu_L)$. The generalized Ornstein--Uhlenbeck process \begin{equation*}
		dX_t = X_{t-} \, dU_t  + dV_t, \qquad X_0 = x,
	\end{equation*}
	is a rich Feller process (with symbol $q(x,\xi) = \psi((x,1)^T \xi)$) if, and only if, \begin{equation}
		\nu_L(\{-1\} \times \mbb{R})=0. \label{gou}
	\end{equation}	
\end{bsp}

Behme \& Lindner \cite{behme} proved that the generalized Ornstein--Uhlenbeck process is a rich Feller process if \eqref{gou} holds. Our proof is not only considerably shorter, but also shows that \eqref{gou} is, in fact, a necessary and sufficient condition.

\begin{proof}[Proof of Example~\ref{sde-12}]
	Set $\sigma(x) := (x,1)$ for $x \in \mbb{R}$ and \begin{align*}
		A(x) 
		:= \{y \in \mbb{R}^2; |\sigma(x)y + x| \leq r\} 
		&= \{y \in \mbb{R}^2; |xy_1+y_2+x| \leq r\} \\
		&= \left\{y \in \mbb{R}^2; \left| y_1 + \frac{y_2}{x} + 1 \right| \leq \frac{r}{|x|} \right\}.
	\end{align*}
 	Since $\nu_L$ is a $\sigma$-finite measure on $\mbb{R}^2 \backslash \{0\}$ and $\I_{A(x)}(y) \xrightarrow[]{|x| \to \infty} \I_{\{y_1=-1\}}$, the dominated convergence theorem yields $\nu_L(A(x)) \xrightarrow[]{|x| \to \infty} \nu_L(\{-1\} \times \mbb{R})$. Consequently, \eqref{sde-co} holds if, and only if, $\nu_L(\{-1\} \times \mbb{R})=0$. 
	\end{proof}	

Using a very similar reasoning, we obtain the following result on solutions of linear SDEs.

\begin{bsp}[Linear SDE] \label{sde-14}
	Let $(L_t)_{t \geq 0}$ be a $d$-dimensional L\'evy process with characteristic exponent $\psi$ and L\'evy triplet $(b_L,Q_L,\nu_L)$. Then for any fixed $c \in \mbb{R}^d$ the solution to the linear SDE \begin{equation*}
		dX_t = c X_{t-} dL_t, \qquad X_0 = x,
	\end{equation*}
	is a rich Feller process (with symbol $q(x,\xi) := \psi(x \xi c)$, $x,\xi \in \mbb{R}$), if, and only if, \begin{equation*}
		\nu_L\left( \left\{y \in \mbb{R}^d; c \cdot y = -1 \right\} \right)=0.
	\end{equation*}
\end{bsp}

Example~\ref{sde-16} shows that Theorem~\ref{sde-0} may fail to hold if the coefficient $\sigma$ does not satisfy the linear growth condition \eqref{lin}.

\begin{bsp} \label{sde-16}
	The ordinary differential equation \begin{equation*}
		dX_t = - X_t^3 \, dt, \qquad X_0 =  x
	\end{equation*}
	has a unique solution which is given by \begin{equation*}
		X_t = \frac{x}{\sqrt{1+2tx^2}}, \qquad t \geq 0, x \in \mbb{R}.
	\end{equation*}
	Although \eqref{sde-co} is trivially satisfied, the process $(X_t)_{t \geq 0}$ is \emph{not} a Feller process since the associated semigroup $(T_t)_{t \geq 0}$ does not satisfy the Feller property: \begin{equation*}
		\lim_{|x| \to \infty} T_t f(x) = f \left( \frac{1}{\sqrt{2t}} \right),
	\end{equation*}
	cf.\ \cite[Example 2.26b)]{ltp} and \cite[Remark 2.12]{cast2}. 
\end{bsp}

\begin{ack}
	I would like to thank Bj\"{o}rn B\"{o}ttcher, Alexei Kulik and Ren\'e Schilling for their helpful comments and suggestions. An earlier version of this paper relied on a result \cite[Theorem 2.5]{cast1} by van Casteren; many thanks to Tristan Haugomat for pointing out that this result is wrong and that a counterexample can be found in \cite[Remark 2.12]{cast2} (see Example~\ref{sde-16}). The error in van Casteren's \cite{cast1} proof is in Proposition 2.2 where it is claimed that $x \mapsto \mbb{E}^x u(X_s)$ vanishes at $\infty$ for any $u \in C_{\infty}(\mbb{R}^d)$; as Example~\ref{sde-16} shows this is, in general, not correct. However, a close look at his proof reveals that a weaker statement than the Feller property holds true: For any $u \in C_{\infty}(\mbb{R}^d)$ and $t \geq 0$ there exists a constant $c \in \mbb{R}$ such that $\lim_{|x| \to \infty} \mbb{E}^x u(X_t)=c$. The current version of this paper does not rely on van Casterens results.
\end{ack}

\end{document}